\numberwithin{equation}{section}
\theoremstyle{plain}
\newtheorem{theorem}{Theorem}[section]
\newtheorem{lemma}[theorem]{Lemma}
\newtheorem{proposition}[theorem]{Proposition}
\newtheorem{corollary}[theorem]{Corollary}
\newtheorem{claim}{Claim}
\newtheorem*{claim*}{Claim}
\newtheorem{ques}[theorem]{Question}
\theoremstyle{definition}
\newtheorem{remark}[theorem]{Remark}
\newtheorem{definition}[theorem]{Definition}
\DeclareMathOperator{\Isom}{{\mathrm Isom}}
\DeclareMathOperator{\Hull}{{\mathrm Hull}}
\DeclareMathOperator{\stab}{{\mathrm stab}}
\def\La{\Lambda}
\def\Ga{\Gamma}
\def\ga{\gamma}
\def\H{\mathbb H}
\def\R{\mathbb R}
\newcommand{\fix}{\textup{Fix}}
\def\geo{\partial_{\infty}}
\title{Discrete subgroups of small critical exponent}
\author{Beibei Liu}
\address{School of Mathematics, Georgia Institute of Technology, Atlanta, GA 30332, USA}
\email{bliu96@gatech.edu}
\author{Shi Wang}
\address{Department of mathematics, Michigan State University, East Lansing, MI 48824, USA}
\email{shiwang.math@gmail.com}
\subjclass[2010]{22E40,20F65}
\providecommand{\keywords}[1]{\textbf{\textit{Index terms---}} #1}
\date{}
\begin{document}

\begin{abstract}

We prove that finitely generated Kleinian groups $\Gamma<\Isom(\H^{n})$ with small critical exponent are always convex-cocompact. Along the way, we also prove some geometric properties for any complete pinched negatively curved manifold with critical exponent less than 1.

\end{abstract}

\keywords{discrete subgroups, critical exponent, convex cocompactness.}

\maketitle

\section{Introduction}
\label{sec:introduction}
A \emph{Kleinian} group is a discrete isometry subgroup of $\Isom(\H^{n})$. The study of $3$-dimensional finitely generated Kleinian groups dates back to Schottky, Poincar\'e and Klein. It is only recently that the geometric  picture of the associated hyperbolic manifold  has been  much better understood, after the celebrated work of Ahlfors' finiteness theorem \cite{Ah}, the proof of the tameness conjecture \cite{Bon, Agol, Gabai}, and the unraveling of the Ending Lamination Conjecture \cite{Min, BCM, som, bow2}. However,  such geometric descriptions  fail in higher dimensions \cite{Kap1, Kap2, KP1, KP2, Poty, poty2}. 

To study higher dimensional Kleinian groups, one way  is to consider  the interplay between the group theoretic properties, the geometry of the quotient manifolds, and the \emph{measure-theoretic size} of the limit set.   It is shown in \cite{gus} that if the Hausdorff dimension of the entire limit set  $\dim_{\mathcal H}(\Lambda(\Gamma))<1$, then $\Gamma$ is geometrically finite.  In such case, the Hausdorff dimension of the entire limit set equals the Hausdorff dimension of the conical limit set \cite{bow3} which is smaller than $1$. However, when $\Gamma$ is geometrically infinite, the size of the entire limit set could a priori be much larger so as $\dim_{\mathcal H}\Lambda(\Gamma)>\dim_{\mathcal H}\Lambda_c(\Gamma)$. Thus, it is interesting to ask what is the relative size of $\Lambda_c(\Gamma)$ compared to the entire $\Lambda(\Gamma)$, or rather, to what extent is the size of $\Lambda_c(\Gamma)$ able to determine the geometric finiteness of the group. By the work of Bishop and Jones \cite{BJ}, the Hausdorff dimension of the conical limit set $\Lambda_c(\Gamma)$ equals the critical exponent $\delta(\Gamma)$. Hence, Kapovich \cite[Problem 1.6]{Kap3} asked the following question:

\begin{ques}
Is every finitely generated Kleinian group $\Gamma<\operatorname{Isom}(\mathbb H^n)$ with $\delta(\Gamma)<1$ geometrically finite?
\end{ques}

In the present paper, we partly answer this in the affirmative while considering in a slightly more general context.

\begin{theorem}
\label{thm:main} For each $n$ and $\kappa$, there exists a positive constant $D(n, \kappa)<1/2$ with the following property that: for every $n$-dimensional Hadamard manifold with pinched sectional curvature $-\kappa^{2}\leq K\leq -1$ and any finitely generated, torsion-free, discrete isometry subgroup $\Gamma<\operatorname{Isom}(X)$, $\Gamma$ is convex cocompact if $\delta(\Gamma)<D(n,\kappa)$.
\end{theorem}

\begin{remark}
The constant $D(n, \kappa)$ can be obtained from the quantitative version of the Tits alternative for pinched negatively curved manifolds \cite{Dey-Kapovich-Liu}.
\end{remark}

\begin{remark}
For $3$-dimensional finitely generated  Kleinian groups $\Gamma$ of \emph{second kind}, i.e., $\La(\Ga)\neq S^{2}$, Bishop and Jones \cite{BJ} showed that $\Gamma$ is geometrically finite if $\delta(\Gamma)<2$.  Hou \cite{Hou1, Hou2, Hou3} proved that a 3-dimensional Kleinian group $\Ga$ is a classical Schottky group if $\dim_{\mathcal H}(\Lambda(\Gamma))<1$.
\end{remark}

In \cite{Kap3}, Kapovich established a relation between the homological dimension and the critical exponent of a Kleinian group. Similar homological vanishing feature has been extended to other rank one symmetric spaces by Connell, Farb and McReynolds \cite{CFM}. It is conjectured \cite[Conjecture 1.4]{Kap3} that the virtual cohomological dimension $\textup{vcd}(\Gamma)$ is bounded above by $\delta(\Gamma)+1$ (assume $\Ga$ has no higher rank cusps). Under the condition $\delta(\Gamma)<1$, it is equivalent \cite{Stal} to ask (See also a weaker form in \cite[Question 5.6]{Bestvina}.):

\begin{ques}\label{ques:free}
Is every finitely generated Kleinian group $\Gamma<\operatorname{Isom}(\mathbb H^n)$ with $\delta(\Gamma)<1$ virtually free?
\end{ques}

In the same paper, Kapovich gave a positive answer to this question under a stronger assumption that $\Gamma$ is finitely presented. On the other hand, when $\delta(\Gamma)$ is sufficiently small, our Theorem \ref{thm:main} automatically implies $\dim_{\mathcal H}(\Lambda(\Gamma))=\delta(\Gamma)<D(n, \kappa)<1$. This implies that the limit set $\La(\Ga)$ is a Cantor set since it is perfect. Following the classical result of  Kulkarni \cite[Theorem 6.11]{Kulkarni78},  we have:

\begin{corollary}
\label{coro:virtuallyfree}
For each $n$, there exists a positive constant $D(n)<1/2$ such that, any finitely generated discrete isometry subgroup $\Gamma<\Isom(\H^{n})$ is virtually free if $\delta(\Gamma)<D(n)$.
\end{corollary}

\begin{remark}
Under the assumption that $\dim_{\mathcal H}(\Lambda(\Gamma))<1$, Pankka and Souto \cite{PS} proved that  any torsion free Kleinian group (not necessarily finitely generated) is free. 

\end{remark}

The method in  \cite{Kap3} also works for discrete isometry subgroups of Hadamard manifolds with negatively pinched  sectional curvature $-\kappa^{2}\leq K\leq -1$, and Question \ref{ques:free} can be asked for this family of groups.  If in addition we know $\Gamma$ is free in Theorem \ref{thm:main}, then the constant $D(n,\kappa)$  can actually be made effective, and independent of $n$ and $\kappa$.

\begin{theorem}
\label{thm:mainfree}
Let $\Gamma<\Isom(X)$ be a finitely generated, virtually free, discrete isometry subgroup of an $n$-dimensional Hadamard manifold with pinched negative curvature $-\kappa^{2}\leq K\leq -1$. If $\delta(\Gamma)<\dfrac{1}{16}$, then $\Gamma$ is convex cocompact. 
\end{theorem}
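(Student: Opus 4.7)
The plan is to reduce to the case $\Gamma = F_k$ is free, eliminate parabolic elements using the critical exponent bound, and then establish convex cocompactness via a quasi-isometric embedding of the Cayley tree of $F_k$ into $X$.

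First, since $\Gamma$ is finitely generated and virtually free, I would pass to a torsion-free finite-index free subgroup $\Gamma_0 = F_k$. Convex cocompactness and the value of the critical exponent are preserved under finite-index operations, so it suffices to establish the conclusion for $\Gamma_0$. Next, any parabolic isometry $g$ of $X$ satisfies $d(p, g^n p) \leq 2\log|n| + O(1)$, a standard consequence of the upper curvature bound $K \leq -1$, which forces $\delta(\langle g\rangle) \geq 1/2$. Since $\delta(\Gamma_0) < 1/16 < 1/2$, no element of $\Gamma_0$ is parabolic; every nontrivial element is hyperbolic with strictly positive translation length, and geometric finiteness for $\Gamma_0$ coincides with convex cocompactness.

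The main step is to show that the orbit map $\phi \colon T \to X$ from the Cayley tree $T$ of $F_k$ (with respect to a free generating set) sending $g \mapsto gp$ is a quasi-isometric embedding. The strategy is a Poincar\'e-series obstruction: if $\phi$ fails to be a quasi-isometric embedding, then arbitrarily long reduced words $g$ have orbital displacements $d(p,gp)$ much smaller than their word length $|g|$. Such collapses can be packaged into a ping-pong / shadow-lemma counting argument on $\partial X$: the failure of disjoint generator shadows inflates the Poincar\'e series $\sum_{g \in F_k} e^{-s d(p,gp)}$ and forces $\delta(F_k) \geq 1/16$, contradicting the hypothesis. The explicit threshold $1/16$ emerges from balancing the free-group growth rate $\log(2k-1)$ against the shadow-disjointness estimates afforded by the tree structure. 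Once $\phi$ is a quasi-isometric embedding, standard results for word-hyperbolic groups acting on negatively curved Hadamard manifolds give a continuous extension $\partial F_k \to \Lambda(\Gamma_0)$ that realizes $\Lambda(\Gamma_0)$ as a set of conical limit points, which is equivalent to convex cocompactness.

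The main obstacle is this third step: producing an effective lower bound $d(p,gp) \geq c|g| - c'$ whose constants depend only on $\delta$, independent of the dimension $n$ and the pinching constant $\kappa$. This requires a quantitative shadow-lemma argument tailored to the tree structure of $F_k$, which in turn presumably rests on the geometric properties of pinched negatively curved manifolds with $\delta < 1$ developed earlier in the paper; the number $1/16$ then appears as the precise threshold at which the ping-pong alternative closes up cleanly.
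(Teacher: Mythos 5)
Your first two steps --- passing to a finite-index free subgroup and excluding parabolics via the horospherical estimate $d(p,g^np)\le 2\log|n|+O(1)$, which forces $\delta\ge 1/2$ for a parabolic cyclic group --- are fine and agree with Proposition \ref{prop:parabolic} of the paper. The gap is the main step. You assert that if the orbit map $T\to X$ fails to be a quasi-isometric embedding, then ``the failure of disjoint generator shadows inflates the Poincar\'e series'' and forces $\delta\ge 1/16$. No mechanism is given for this, and as a general principle it is false: by Bishop--Jones, $\delta$ equals the Hausdorff dimension of the \emph{conical} limit set only, and geometric infiniteness is precisely the phenomenon of orbit rays that escape without being conically tracked; a sparse family of collapsing reduced words contributes nothing to the exponential growth rate of the orbit, so smallness of $\delta$ does not by itself prevent arbitrarily long words from having short displacement. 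Some genuine geometric input is needed to convert ``not convex cocompact'' into ``many short elements based at a common point,'' and producing that input is the entire content of the proof; your step three restates the goal rather than proving it. Your account of the constant $1/16$ (``balancing the free-group growth rate $\log(2k-1)$ against shadow-disjointness estimates'') is likewise unsupported.

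For comparison, the paper's route is: $\delta<1$ gives an area-decreasing Besson--Courtois--Gallot self-map of $M$, hence a linear isoperimetric inequality (Theorem \ref{thm:linear iso}), hence the fact that escaping closed geodesics eventually have normal injectivity radius at most $C(\delta)=4/(1-\delta)$ (Corollary \ref{prop:subseq-of-bdd-NJ}). If $\Gamma$ were not convex cocompact, Theorem \ref{thm:KL} produces an escaping sequence of closed geodesics; the bounded normal injectivity radius lets one split each such geodesic into two shorter nontrivial loops, and an infinite-descent argument drives the infimal length of escaping closed geodesics down to at most $4C(\delta)$. This yields two loxodromics of displacement about $4C(\delta)=16/(1-\delta)$ at a common basepoint generating a rank-two free group, whence $\delta\ge \ln 3\cdot(1-\delta)/16$, incompatible with $\delta<1/16$ since $1/16<\ln 3/(16+\ln 3)$. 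Without the isoperimetric/normal-injectivity-radius input (or a substitute for it), your argument does not close.
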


Thus, in view of Kapovich's result \cite[Corollary 1.5]{Kap3}, we obtain,

\begin{corollary} Any finitely presented Kleinian group with $\delta(\Gamma)<\dfrac{1}{16}$ is convex cocompact. 
\end{corollary}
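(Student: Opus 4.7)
The plan is to derive the corollary by chaining Theorem \ref{thm:mainfree} with Kapovich's virtual freeness result \cite[Corollary 1.5]{Kap3}, both of which apply comfortably in the regime $\delta(\Gamma)<1/16$. Note that $\H^{n}$ is a Hadamard manifold of constant sectional curvature $-1$, so it satisfies the pinched curvature hypothesis $-\kappa^{2}\leq K\leq -1$ of Theorem \ref{thm:mainfree} trivially with $\kappa=1$; the ambient geometric setting is therefore directly compatible with that theorem.

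The first step is to invoke \cite[Corollary 1.5]{Kap3}, which asserts that any finitely presented Kleinian group $\Gamma<\Isom(\H^{n})$ with $\delta(\Gamma)<1$ is virtually free. Since we are assuming $\delta(\Gamma)<1/16<1$ and $\Gamma$ is finitely presented, this delivers the virtual freeness of $\Gamma$. The second step is to apply Selberg's lemma to extract a torsion-free subgroup $\Gamma_{0}<\Gamma$ of finite index. The subgroup $\Gamma_{0}$ remains finitely generated, discrete, and virtually free, and its critical exponent agrees with that of $\Gamma$, so $\delta(\Gamma_{0})<1/16$. Hence all hypotheses of Theorem \ref{thm:mainfree} are satisfied for $\Gamma_{0}$, giving that $\Gamma_{0}$ is convex cocompact. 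Since convex cocompactness is a commensurability invariant, it transfers back to $\Gamma$.

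There is no substantive obstacle in this argument: the corollary is an essentially formal consequence of Theorem \ref{thm:mainfree} together with Kapovich's input, and the only step requiring care is the standard reduction to the torsion-free case via Selberg's lemma, which preserves finite generation, the critical exponent, virtual freeness, and convex cocompactness.
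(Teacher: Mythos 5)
Your proof is correct and follows exactly the route the paper intends: apply Kapovich's \cite[Corollary 1.5]{Kap3} to get virtual freeness from finite presentability and $\delta(\Gamma)<1$, then invoke Theorem \ref{thm:mainfree}. The additional reduction via Selberg's lemma to a torsion-free finite-index subgroup (which is then genuinely free, with the same critical exponent) is a sensible bit of extra care, since the paper's standing convention and Theorem \ref{thm:small-implies-proper} assume torsion-freeness.
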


One of the main efforts in our proofs is to investigate the geometric properties of the quotient manifold $M=X/\Gamma$ under the condition that $\delta$ is small. While these results are only restricted to $\delta<1$, we still find that they might be of independent interest and worth highlighting. The following theorem is closely related to the classical Plateau's problem, where we obtain a certain type of linear isoperimetric inequality for the quotient manifold $M=X/\Gamma$.

\begin{theorem}
\label{thm:linear}
Suppose that $\mathcal{C}$ is a union of smooth loops in $M=X/ \Gamma$ which represents a trivial homology class in $H_1(M,\mathbb Z)$. If $\delta(\Ga)=\delta<1$, then $\mathcal C$ boundes a smooth surface $i:\Sigma\rightarrow M$ (See Definition \ref{def:surface}) whose area satisfies
\[A(i)\leq \dfrac{4}{1-\delta} \ell(\mathcal{C}),\]
where $\ell(\mathcal{C})$ denotes the total length of the smooth loops in $\mathcal{C}$.
\end{theorem}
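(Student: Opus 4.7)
The plan is to build $\Sigma$ iteratively, filling $\mathcal{C}$ by cone-disks lifted from the universal cover $X$ and projected down, with a residual boundary term that shrinks in a geometric series controlled by $\delta$.

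First, I would lift $\mathcal{C}$ to $X$. Write $\mathcal{C}=c_1\cup\dots\cup c_k$ and for each loop $c_i$ choose a lift $\tilde{c}_i:[0,\ell_i]\to X$ starting at some $y_i$ and ending at $g_iy_i$ with $g_i\in\Gamma$. The hypothesis $[\mathcal{C}]=0\in H_1(M,\mathbb{Z})$ is equivalent, up to conjugation, to $\sum g_i=0$ in $\Gamma^{\mathrm{ab}}$.

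Second, I would use the linear isoperimetric inequality available in $X$: since $X$ is Hadamard with $K\le -1$, the geodesic cone from any basepoint onto a rectifiable loop $\gamma\subset X$ is a disk of area at most $\ell(\gamma)$. For a fixed basepoint $x_0\in X$, close each $\tilde{c}_i$ to a geodesic triangle $[x_0,y_i]\cup\tilde{c}_i\cup[g_iy_i,x_0]$ and fill it by the cone disk $\tilde{D}_i$, so that $\operatorname{Area}(\tilde{D}_i)\le d(x_0,y_i)+\ell(c_i)+d(x_0,g_iy_i)$. Projecting by $\pi:X\to M$, the $2$-chain $\sum_i \pi(\tilde{D}_i)$ has boundary $\mathcal{C}+\mu^{(1)}$, where $\mu^{(1)}$ is a union of loops based at $\pi(x_0)$, null-homologous in $M$ because $\sum g_i=0$ in $\Gamma^{\mathrm{ab}}$, with total length
\[
\ell(\mu^{(1)})=\sum_i\bigl(d(x_0,y_i)+d(x_0,g_iy_i)\bigr).
\]

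Third, I would iterate: apply the same construction to $\mu^{(1)}$ to obtain $\mu^{(2)}$, and so on. The telescoping alternating sum of partial fillings has boundary $\mathcal{C}$ in the limit $\ell(\mu^{(k)})\to 0$, and its total area is at most $\sum_k\bigl(\ell(\mu^{(k)})+\ell(\mu^{(k+1)})\bigr)$.

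The crux of the argument, and the main obstacle, is establishing a geometric contraction $\ell(\mu^{(k+1)})\le \delta\,\ell(\mu^{(k)})$. I would obtain this by choosing, at each stage, a new basepoint from the $\Gamma$-orbit of $x_0$ that shortens the correction edges on average. Since the Poincar\'e series $\sum_{\gamma\in\Gamma}e^{-sd(x_0,\gamma x_0)}$ converges for $s>\delta$, and since $\delta<1$ is strictly less than the exponential volume growth rate of $X$, a weighted averaging over suitable orbit translates yields the contraction factor $\delta$. Summing the resulting geometric series and absorbing the factor of $2$ coming from the triangle-perimeter estimates gives $\operatorname{Area}(\Sigma)\le \tfrac{4}{1-\delta}\ell(\mathcal{C})$. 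A final smoothing replaces the piecewise-smooth union of projected cone disks by a smooth surface, with an arbitrarily small extra cost in area.
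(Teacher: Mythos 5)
Your geodesic-cone area estimate ($\operatorname{Area}(\operatorname{Cone}_p(\gamma))\le \ell(\gamma)$ when $K\le -1$) is correct and in fact appears as a lemma in the paper, but the iteration you build on top of it has two genuine gaps, and the second one is fatal. First, the initial correction term is not controlled by $\ell(\mathcal{C})$: the loops $c_i$ may sit arbitrarily far apart in $M$, so no choice of basepoint $x_0$ bounds $\sum_i\bigl(d(x_0,y_i)+d(x_0,g_iy_i)\bigr)$ in terms of $\ell(\mathcal{C})$ alone, whereas the theorem's constant $\tfrac{4}{1-\delta}$ must be independent of the positions of the loops. Second, and more seriously, the claimed contraction $\ell(\mu^{(k+1)})\le\delta\,\ell(\mu^{(k)})$ cannot hold: the loops of $\mu^{(k)}$ represent fixed conjugacy classes $g_i\in\Gamma$ (the classes do not change under your iteration), so whenever some $g_i$ is a nontrivial loxodromic element, $\ell(\mu^{(k)})$ is bounded below by its translation length $\tau(g_i)>0$ for every $k$, and the lengths cannot decay geometrically to zero. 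The appeal to convergence of the Poincar\'e series supplies no mechanism for shortening loops; the critical exponent controls orbital counting, not the lengths of representatives of homotopy classes, and ``weighted averaging over orbit translates'' is not an argument.

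The actual role of $\delta<1$ in the paper is entirely different: one takes a near-minimizer $\Sigma_\epsilon$ among all admissible surfaces with $A(\Sigma_\epsilon)\le(1+\epsilon)A_0$, applies the Besson--Courtois--Gallot natural map $F:M\to M$, which is homotopic to the identity and satisfies $|\operatorname{Jac}_2(F)|\le\bigl(\tfrac{1+\delta}{2}\bigr)^2<1$ and $\|dF\|\le 1+\delta$, and then glues on cylindrical homotopies between each $\alpha_i$ and $F(\alpha_i)$ (this is where the cone lemma is used, with area cost $(2+\delta)\ell(\alpha_i)$ per boundary loop). The resulting surface has the original boundary, so its area is at least $A_0$, which yields $A_0\le\bigl(\tfrac{1+\delta}{2}\bigr)^2(1+\epsilon)A_0+(2+\delta)\ell(\mathcal{C})$; since the Jacobian factor is strictly less than $1$, one solves for $A_0$ and obtains the linear bound. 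If you want to salvage your approach you would need to replace the basepoint-coning iteration with some genuinely area-contracting self-map of $M$, which is precisely what the BCG map provides.
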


Finitely generated Kleinian groups in dimension $3$ have only finitely many cusps \cite{Sul}, but the same result does not hold in higher dimensions \cite{Kap1}. As an application of Theorem \ref{thm:linear}, we show that under the assumption  $\delta<1$, the $\epsilon$-thin part of $M$ has only finitely many connected components when $\epsilon$ is small enough. In particular, $M$ has only finitely many cusps.

\begin{theorem}
\label{thm:bounded}
Let $\Gamma<\Isom(X)$ be a finitely generated, torsion-free, discrete isometry subgroup of an $n$-dimensional Hadamard manifold with pinched negative curvature $-\kappa^{2}\leq K\leq -1$. Suppose that $\delta(\Gamma)<1$. Then,
\begin{enumerate}
\item The number of cusps in $M=X/ \Gamma$ is no greater than the  first Betti number of $M$. 
\item $M$ has bounded geometry, that is, the noncuspidal part of $M$ has a uniform lower bound on its injectivity radius.
\item $\Gamma$ is convex cocompact if and only if the injectivity radius function $\textup{inj}: M\rightarrow \mathbb{R}$ is proper.
\end{enumerate}
\end{theorem}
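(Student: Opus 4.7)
The plan is to treat the three parts in sequence, using Theorem~\ref{thm:linear} as the main analytic tool; in each part, the linear isoperimetric upper bound on area is pitted against a Margulis-type geometric lower bound.

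For Part~(1), the goal is to show each cusp contributes a linearly independent class to $H_1(M;\mathbb{Q})$. Enumerate the cusps $C_1,\ldots,C_k$ and pick a horocyclic loop $\gamma_i\subset C_i$ generating a cyclic factor of $\pi_1(T_i)$, where $T_i$ is the horospherical cross-section. Assume toward contradiction that $\sum n_i[\gamma_i]=0$ in $H_1(M;\mathbb{Q})$ with some $n_i\neq 0$, and push each $\gamma_i$ to depth $s$ to obtain $\gamma_i^s$ of length at most $\ell_i e^{-s}$ (using $K\leq -1$). Theorem~\ref{thm:linear} produces a bounding surface $\Sigma^s$ with $\operatorname{Area}(\Sigma^s)\leq\frac{4}{1-\delta}\sum|n_i|\ell_i e^{-s}\to 0$. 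On the other hand, for each $0\leq r\leq s$ the horospherical slice $\Sigma^s\cap T_i(r)$ represents the class $-n_i[\gamma_i^r]\in H_1(T_i(r))$ and so has length at least $|n_i|$ times the systole of $T_i(r)$ in that direction; the latter is bounded below by $\ell_i e^{-\kappa r}$ via the lower curvature bound $-\kappa^2\leq K$. Integrating in $r$ yields $\operatorname{Area}(\Sigma^s\cap C_i)\geq|n_i|\ell_i(1-e^{-\kappa s})/\kappa$, which stays bounded below by a positive constant as $s\to\infty$, contradicting the vanishing upper bound.

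For Part~(2), suppose toward contradiction that injectivity radius is not uniformly bounded below on $M^{\noncusp}$. Then there exist $p_n\in M^{\noncusp}$ with $\operatorname{inj}(p_n)\to 0$; by the Margulis lemma (and ruling out cusps) each $p_n$ lies in a Margulis tube around a primitive closed geodesic, so after passing to the primitive generator one extracts distinct primitive closed geodesics $\gamma_n\subset M^{\noncusp}$ with $\ell(\gamma_n)\to 0$. Choose $b_1(M)+1$ of them of length less than a prescribed $L$; since $\dim_\mathbb{Q} H_1(M;\mathbb{Q})=b_1(M)$ they are linearly dependent, so some nontrivial combination $\sum a_i\gamma_i$ is null-homologous, and Theorem~\ref{thm:linear} bounds a bounding surface by $\frac{4L}{1-\delta}\sum|a_i|$. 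On the other hand, the Margulis lemma produces disjoint tubes $T(\gamma_i)$ of radius $R_i$ satisfying $\ell(\gamma_i)\sinh R_i\approx\epsilon_0$, and a coarea estimate inside each tube—using that the slice at distance $r$ from the core is a $1$-cycle in the longitudinal class $a_i[\gamma_i]$—shows the restriction of the surface to $T(\gamma_i)$ has area at least $|a_i|\ell(\gamma_i)\sinh R_i\approx|a_i|\epsilon_0$. Comparing the two bounds forces $L\gtrsim\epsilon_0(1-\delta)/4$, so only at most $b_1(M)$ primitive closed geodesics of $M^{\noncusp}$ can fall below this threshold; their minimum yields a uniform positive lower bound on primitive translation length and hence on $\operatorname{inj}$ throughout $M^{\noncusp}$.

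Part~(3) is then a formal consequence. For the forward direction, convex cocompactness makes $\Core(\Gamma)/\Gamma$ compact and eliminates cusps, so injectivity radius is bounded below on the convex core and grows linearly in the distance to it, hence is proper. For the converse, properness of injectivity radius excludes cusps (where injectivity radius tends to $0$) and combined with Part~(2) gives a uniform lower bound on $\operatorname{inj}$ throughout $M$; properness also excludes degenerate ends, which would supply escaping sequences of bounded injectivity radius. Hence $\Gamma$ is geometrically finite with no cusps, that is, convex cocompact. The principal obstacle is the area lower bound inside Margulis tubes in Part~(2), which requires a careful coarea and systolic analysis under pinched negative curvature; once this is in place the isoperimetric inequality of Theorem~\ref{thm:linear} closes the argument in all three parts cleanly.
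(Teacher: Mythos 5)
Your arguments for parts (1) and (2) are essentially the paper's: a linear isoperimetric upper bound from Theorem \ref{thm:linear} played against a coarea lower bound inside cusp shells, respectively inside Margulis tubes whose radii blow up as the core length shrinks. The paper slices by the injectivity-radius function on a fixed-width shell in (1) and uses the cruder bound $A(\Sigma\cap T_R)\geq kR\,\ell(\alpha)$ in place of your $\sinh R$ refinement in (2), but these are cosmetic differences; both parts of your proposal are sound.

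The gap is in the converse direction of part (3), at the sentence claiming that properness ``excludes degenerate ends, which would supply escaping sequences of bounded injectivity radius.'' In this generality (higher-dimensional, variable pinched curvature) that is precisely the nontrivial assertion, and nothing you have established delivers it. What geometric infiniteness gives you, via Theorem \ref{thm:KL}, is an escaping sequence of closed geodesics $\alpha_i$ with \emph{no a priori control on $\ell(\alpha_i)$}; if $\ell(\alpha_i)\to\infty$, the points on $\alpha_i$ need not have bounded injectivity radius, so there is no immediate contradiction with properness. (In dimension $3$ one can extract bounded-length geodesics exiting a degenerate end via pleated or simplicial hyperbolic surfaces and the Bers constant, but no such tool exists here.) The paper closes this gap with a genuinely substantial argument: Corollary \ref{prop:subseq-of-bdd-NJ} (itself a consequence of the isoperimetric inequality, via part (2) of Theorem \ref{thm:collar}) extracts a subsequence with $\operatorname{NJ}(\alpha_i)\leq C(\delta)$; the bounded normal injectivity radius is then used, through the decomposition of Section \ref{sec:cut-closed-geod} and Lemma \ref{lem:split-pw-geod}, to run a minimization over ``thin bows'' on each $\alpha_i$ and manufacture a homotopically nontrivial geodesic loop of length $\leq 4C_0+1$ based at a point of $\alpha_i$ — contradicting properness. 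You would need to supply this (or an equivalent) argument; as written, the key implication ``proper $\Rightarrow$ no escaping closed geodesics'' is asserted rather than proved. The forward direction of (3) is fine: the linear growth of $\operatorname{inj}$ in the distance to the compact convex core is standard, and the paper's alternative via Proposition \ref{prop:piecegeod-close-to-geod} gives the same conclusion.
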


\begin{remark} Note that without the assumption on the critical exponent, it is showed in \cite[Proposition 2.6]{Benoist-Hulin20} that $\Gamma$ is convex cocompact if and only if $M$ is Gromov hyperbolic and the injectivity radius function is proper.
\end{remark}

\subsection*{Outline of the proof of Theorem \ref{thm:main}} We first observe that whenever $\delta<1$ there is an area-decreasing self-map (the Besson-Courtois-Gallot map) on $M$. This allows us to prove the linear isoperimetric type inequality as in Theorem \ref{thm:linear}, from which we deduce further that closed geodesics on $M$ asymptotically have uniformly bounded normal injectivity radii. This means if there is an escaping sequence of closed geodesics on $M$, then there exists a subsequence on which the normal injectivity radii are uniformly bounded. Next, we observe that given a long closed geodesic with small normal injectivity radius, one can always separate along the normal direction to replace it by a shorter closed geodesic nearby. Then, we use the result from \cite{KL1} which states that $\Gamma$ is geometrically infinite if and only if there exists an escaping sequence of closed geodesics. The assumption that $D(n, \kappa)$ is smaller than $1/2$ excludes parabolic elements, and assume by contradiction that there is one such escaping sequence. Using the idea of infinite descent, we can reduce the length of the closed geodesics and find another escaping sequence whose lengths and normal injectivity radii are both uniformly bounded, from which we can find two loxodromic isometries that move a common point within a  uniformly bounded distance. This means the non-elementary subgroup generated by the two isometries will have large critical exponent, thus leading to a contradiction if we assume $\delta$ is small enough.

\subsection*{Organization of the paper} In Section \ref{sec:preliminary}, we review some elementary results of negatively pinched Hadamard manifolds and the Besson-Courtois-Gallot map. In Section \ref{sec:collar}, we give the proofs of Theorem \ref{thm:linear} and Theorem \ref{thm:bounded}. In Section \ref{sec:proof}, we prove Theorem \ref{thm:small-implies-proper}, which together with Theorem \ref{thm:bounded} implies Theorem \ref{thm:main} and Theorem \ref{thm:mainfree}. 
 
\subsection*{Acknowledgments} We would like to thank Grigori Avramidi, Igor Belegradek,  Lvzhou Chen, Joel Hass, Michael Kapovich, Gabriele Viaggi and  Zhichao Wang for helpful discussions. We appreciate the anonymous referees for the helpful comments and suggestions. We are also grateful to Max Planck Institute for Mathematics in Bonn for its hospitality and financial support, where this work was completed.

\section{Preliminaries}
\label{sec:preliminary}

\subsection{Discrete isometry groups}
\label{sec:isometries}
Let $X$ be a complete, simply connected, $n$-dimensional Riemannian manifold of pinched negative curvature $-\kappa^{2}\leq K\leq -1$ where $\kappa\geq 1$. The Riemannian metric on $X$ induces the distance function $d_X$ and $(X,d_X)$ is a uniquely geodesic space.  With the curvature assumption, the metric space $(X,d_X)$ is Gromov hyperbolic, where the hyperbolicity constant $\delta_{0}$ can be chosen as $\cosh^{-1}(\sqrt{2})$, i.e. every geodesic triangle in $X$ is $\delta_{0}$-slim.

By the Cartan-Hadamard theorem, $X$ is diffeomorphic to the Euclidean space $\mathbb{R}^{n}$ via the exponential map at any point in $X$. We can naturally compactify $X$ by adding the ideal boundary $\geo X$, thus the compactified space $\bar{X}=X\cup \geo X$ is homeomorphic to the unit $n$-ball $B^{n}$. 

Every isometry $\gamma\in \operatorname{Isom}(X)$ extends the action to the ideal boundary, so it induces a diffeomorphism on $\bar X$. Based on its fixed point set $\fix(\gamma)$, the isometry $\gamma$ on $X$ can be classified as follows: 

\begin{enumerate}
\item $\gamma$ is \emph{parabolic} if $\fix(\ga)$ is a singleton $\{p\}\subset \geo X$. 

\item $\gamma$ is \emph{elliptic} if it has a fixed point in $X$. In this case, the fixed point set $\fix(\ga)$ is a totally geodesic subspace of $X$ invariant under $\ga$. In particular, the identity map is elliptic. 

\item $\ga$ is \emph{loxodromic} if $\fix(\ga)$ consists of two distinct points $p, q\in \geo X$. In this case, $\ga$ stabilizes and translates along the geodesic $pq$, and we call the geodesic $pq$ the \emph{axis} of $\gamma$.

\end{enumerate}

One can also use the translation length to classify the isometries on $X$. For each isometry $\gamma\in \Isom(X)$, we define its \emph{translation length} $\tau(\gamma)$ as 
$$\tau(\gamma):=\inf_{x\in X} d_X(x, \gamma(x)). $$
The isometry $\gamma$ is loxodromic if and only if $\tau(\gamma)>0$. In this case, the infimum is attained exactly when the points are on the axis of $\gamma$. The isometry $\gamma$ is parabolic if and only if $\tau(\gamma)=0$ and the infimum is not attained. The isometry $\gamma$ is elliptic if and only if $\tau(\gamma)=0$ and the infimum is attained. 

Let $\Gamma<\Isom(X)$ be a discrete subgroup which acts on $X$ properly discontinuously. If $\Gamma
$ is torsion-free, then any nontrivial element in $\Gamma$ is either loxodromic or parabolic. We denote the quotient manifold $X/ \Gamma$ by $M$, and let $\pi: X\rightarrow M$ denote the canonical projection. The geodesic loops $c: [a, b]\rightarrow M$ at $p=c(a)=c(b)\in M$ are in  one-to-one correspondence with  geodesic segments from $x$ to $\gamma (x)$ where $x\in X$ with $\pi(x)=p$ and $\gamma\in \Gamma$. Recall that the injectivity radius at a point $p\in M$ is the largest radius for which the exponential map at $p$ is a diffeomorphism. The injectivity radius at a point $p \in M$ is half the length of shortest geodesic loop at $p$ since there are no conjugate points in $M$. We use $\textup{inj}(p)$ to denote the injectivity radius at $p$ and define
$$d_{\Gamma}(x):=\min_{\gamma\in \Gamma\setminus \{id\}} d_{X}(x, \gamma (x))$$
for $x\in X$. Then $d_{\Gamma}(x)=2\textup{inj}(\pi(x))$. We say the injectivity radius function $\textup{inj}: M\rightarrow \R$ is \emph{proper} if the preimage of a compact set is compact. The injectivity radius function is 1-Lipschitz. To see this, given any two points $p, q\in M$, let $\tilde{p}, \tilde{q}$ be a pair of lifts of $p, q$ in $X$ whose distance is the same as the distance $d(p, q)$ of $p, q\in M$. There exists an isometry $\ga\in \Ga$ such that $d_{X}(\tilde{p}, \ga{\tilde{p}})=d_{\Ga}(\tilde{p})$, and
\begin{align*}
2\textup{inj}(q)\leq d_{X}(\tilde{q}, \ga(\tilde{q})) & \leq d_{X}(\tilde{q}, \tilde{p})+d_{X}(\tilde{p}, \ga(\tilde{p}))+d_{X}(\ga(\tilde{p}), \ga(\tilde{q}))\\
& =2d(p, q)+2\textup{inj}(p).
\end{align*}
Hence, $\textup{inj}(q)-\textup{inj}(p)\leq d(p, q)$.

Recall that the \emph{critical exponent} $\delta(\Gamma)$ of a torsion-free discrete isometry group $\Ga<\Isom(X)$  is defined  to be:
$$\delta(\Gamma):=\inf \{ s: \sum_{\ga\in \Ga} \exp(-sd_X(p, \gamma(p)))<\infty \},$$
where $p$ is a given point in $X$. Note that $\delta(\Ga)$ is independent of the choice of $p$. Alternatively, one can also define the critical exponent $\delta(\Gamma)$ as follows \cite{Nicho}:
\begin{equation}
\label{for3}
\delta(\Gamma)=\limsup_{R\rightarrow \infty}\dfrac{\log(N(R))}{R},
\end{equation}
where $N(R)=\#\{\gamma\in \Gamma\mid d_{X}(x, \gamma (x))\leq R\}$ for any given point $x\in X$. 

We will need to use the following proposition later in the proofs.

\begin{proposition}\cite[Corollary 6.12]{KL1}
\label{prop:piecegeod-close-to-geod}
Let $w\in M=X/ \Gamma$ be a piecewise geodesic loop which consists of $r$ geodesic segments, and let $\alpha$ be the closed geodesic freely homotopic to $w$ such that the length $\ell(\alpha)\geq \epsilon>0$. Then $\alpha$ is contained in the $D$-neighborhood of the loop $w$, where 
\[D=\cosh^{-1}(\sqrt{2})\lceil \log_{2}r \rceil +\sinh^{-1}(2/\epsilon)+2\delta_{0}. \]
\end{proposition}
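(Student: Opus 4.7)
The plan is to lift to the universal cover and combine two standard ingredients of Gromov-hyperbolic geometry: the slim-triangle logarithmic correction, which straightens a piecewise geodesic into a single geodesic at the cost of $\delta_0$ per halving step, and a Morse-type estimate that locates the axis of a loxodromic isometry near the chord from a point to its translate. Fix a basepoint on $w$ and lift to a piecewise geodesic path $\widetilde w \subset X$ from some $p$ to $\gamma p$, where $\gamma \in \Gamma$ represents the free homotopy class of $w$. Since $\alpha$ is a closed geodesic of length $\ell(\alpha) = \epsilon > 0$, the element $\gamma$ is loxodromic, its axis $A$ projects to $\alpha$, and $\tau(\gamma) = \epsilon$. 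It then suffices to bound the one-sided Hausdorff distance from $A$ to $\widetilde w$ by $D$ and project back to $M$.

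The first ingredient is to show that the geodesic $[p, \gamma p]$ lies in the $D_1 := \delta_0 \lceil \log_2 r \rceil$-neighborhood of $\widetilde w$. Write the breakpoints of $\widetilde w$ as $p = p_0, p_1, \ldots, p_r = \gamma p$; pair up consecutive breakpoints and replace each pair of adjacent geodesic segments by the single geodesic through its outer endpoints. By $\delta_0$-slimness of geodesic triangles, each point on the new path sits within $\delta_0$ of the old one. After $\lceil \log_2 r \rceil$ such rounds one is left with the single geodesic $[p, \gamma p]$, with cumulative Hausdorff displacement at most $D_1$.

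The second, and main, step is to show that the axis $A$ itself lies within $D_2 := \sinh^{-1}(2/\epsilon) + 2\delta_0$ of the chord $[p, \gamma p]$. The heuristic is that a long piece of $[p, \gamma p]$ must fellow-travel $A$: otherwise, convexity of the distance to $A$ combined with Saccheri-quadrilateral trigonometry in a CAT$(-1)$-comparison quadrilateral whose base lies on $A$ and has length at least $\epsilon$ (the translation length) would force the midpoint $q$ of $[p, \gamma p]$ to satisfy
\[
\sinh(d(q,A)) \, \sinh(\epsilon/2) \le 1,
\]
hence $d(q,A) \le \sinh^{-1}(2/\epsilon)$. The additive slack $2\delta_0$ then absorbs both the passage from the pure CAT$(-1)$ estimate to the $\delta_0$-hyperbolic setting and the propagation of the bound from $q$ along the rest of $A$, using that $\gamma$-translates of $q$ continue to lie close to $A$ and to $\widetilde w$. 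Combining with the first step gives $A$ within $D_1 + D_2 = D$ of $\widetilde w$.

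I expect this last estimate to be the main obstacle: extracting the explicit constant $\sinh^{-1}(2/\epsilon)$ with the correct inverse dependence on the translation length, and transferring the hyperbolic-trigonometric identity cleanly from the model $\mathbb H^2$ computation to a general pinched-negatively-curved Hadamard manifold via CAT$(-1)$-comparison and the slim-triangle slack, is where the genuine work lies. The first step is essentially bookkeeping on top of the standard halving argument, and the final descent from $X$ to $M$ is immediate.
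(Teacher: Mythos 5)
First, note that the paper does not prove this proposition: it is quoted verbatim from \cite[Corollary 6.12]{KL1}, and the only original content here is the remark that the simplicity hypothesis on $\alpha$ can be dropped. So your proposal can only be measured against the standard argument (the one carried out in [KL1]), and your two-ingredient outline --- the $\delta_0\lceil\log_2 r\rceil$ halving step to replace $\widetilde w$ by the chord $[p,\gamma p]$, plus a hyperbolic-trigonometric estimate locating the axis near the $\gamma$-orbit of that chord --- is indeed that argument. Your identification of where $\sinh^{-1}(2/\epsilon)$ comes from is correct and sharp: in $\mathbb H^2$, as the height $h=d(p,A)$ tends to infinity the chord $[p,\gamma p]$ converges to the geodesic joining the ideal endpoints of the two perpendiculars at $x$ and $\gamma x$, whose distance $d$ to $A$ satisfies exactly $\cosh d=\coth(\tau/2)$, i.e.\ $\sinh d\,\sinh(\tau/2)=1$; monotonicity in $h$ and $\sinh(\tau/2)\geq \tau/2\geq \epsilon/2$ then give $d(q,A)\leq\sinh^{-1}(2/\epsilon)$ for the midpoint $q$.

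Two points in your write-up are genuine gaps rather than absorbed slack. First, the assertion that ``the axis $A$ itself lies within $D_2$ of the chord $[p,\gamma p]$'' is false as stated: $A$ is unbounded and the chord is compact. What you must prove is that $A\subset N_{D_2}\bigl(\bigcup_n\gamma^n[p,\gamma p]\bigr)$, and the passage from ``each midpoint $\gamma^nq$ lies within $\sinh^{-1}(2/\epsilon)$ of $A$'' to ``each point of $A$ lies within $\sinh^{-1}(2/\epsilon)+2\delta_0$ of the piecewise geodesic line'' is precisely where the $2\delta_0$ is spent: one runs a slim-quadrilateral argument on the quadrilateral with vertices $\gamma^nq$, $\gamma^{n+1}q$ and their projections to $A$ (whose legs are now bounded by $\sinh^{-1}(2/\epsilon)$), together with the observation that $[\gamma^nq,\gamma^{n+1}q]$ is $\delta_0$-close to the piecewise geodesic line. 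A careless accounting here produces $3\delta_0$ rather than $2\delta_0$, so this step needs to be done, not deferred. Second, the transfer of the $\mathbb H^2$ computation to pinched negative curvature is not a routine CAT$(-1)$ four-point comparison: comparing the two triangles obtained by splitting the quadrilateral $p,x,\gamma x,\gamma p$ only bounds $d(q,x)$ and $d(q,\gamma x)$, which is useless when $h$ is large. One needs either the convexity of $t\mapsto d(\sigma(t),A)$ reinforced by the second-order comparison coming from $K\leq -1$, or an angle-comparison argument showing that base angles $\geq\pi/2$ in the comparison quadrilateral only decrease the distance from the summit to the base line. You correctly flag this as the hard point, but as written the proof of the key inequality is an announced heuristic, not an argument.
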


\begin{remark} The original corollary was stated under the extra assumption that $\alpha$ is simple. However, the proof of \cite[Corollary 6.12]{KL1} does not rely on this fact so we have removed the assumption here.
\end{remark}

\subsection{Thick-thin decomposition}
\label{sec:cusp}
 Given an isometry $\gamma\in \operatorname{Isom}(X)$, and a constant $\epsilon>0$, we define the \emph{Margulis region} $\operatorname{Mar}(\gamma, \epsilon)$ of $\gamma$ as
$$\operatorname{Mar}(\gamma, \epsilon): =\{ x\in X \mid d_X(x, \gamma(x))\leq \epsilon \}. $$
It is a convex subset by the convexity of the distance function. Given a point $x\in X$, and a constant $\epsilon>0$, the set 
$$\mathcal{F}_{\epsilon}(x): = \{ \gamma\in \operatorname{Isom}(X)\mid d_X(x, \gamma (x))\leq \epsilon \}$$
consists of all isometries that translate $x$ in a distance at most $\epsilon$. For any discrete subgroup $\Gamma<\operatorname{Isom}(X)$, we denote by $\Gamma_{\epsilon}(x)$ the group generated by $\mathcal{F}_{\epsilon}(x)\cap \Gamma$. The Margulis lemma \cite[Theorem 9.5]{BGS} states that $\Gamma_{\epsilon}(x)$ is a finitely generated  virtually nilpotent group  for any  $0<\epsilon< \epsilon (n, \kappa)$, where $\epsilon(n, \kappa)$ is the Margulis constant depending on the dimension $n$ of $X$ and the sectional curvature bound $\kappa$.

We define the $\Gamma$-invariant set
$$\mathcal{T}_{\epsilon}(\Gamma):=\{p \in X\mid \Gamma_{\epsilon}(p) \textup{ is infinite} \}.$$
The \emph{thin part} (more precisely, the $\epsilon$-thin part) of the quotient orbifold $M=X/ \Gamma$, which we denote by $\textup{thin}_{\epsilon}(M)$, is defined to be $\mathcal{T}_{\epsilon}(\Gamma)/ \Gamma$. The closure of the complement $M\setminus \textup{thin}_{\epsilon}(\Gamma)$ is called the \emph{thick part} of $M$, denoted by $\textup{thick}_{\epsilon}(M)$. The thin part consists of bounded and unbounded components. The bounded components are called the \emph{Margulis tubes}, which are  neighborhoods of short closed geodesics of length no greater than $\epsilon$. More precisely, for every point $x$ in the closed geodesic and every tangent vector $v$ at $x$ perpendicular to the geodesic, we consider a unit speed ray $\rho$ emanating from $x$ in the direction of $v$. There exists $R$, depending on $x$ and $v$ such that 
$$d_{\Ga}(\rho(R))=\epsilon \quad \textup{ and }  \quad d_{\Ga}(\rho(t))<\epsilon$$
for all $ t< R$. We call the arc $\rho([0, R])$ a \emph{maximal radial arc}, and a Margulis tube is the union of all radial arcs emanating from a short closed geodesic. For details, see for example \cite{BCD}.

The unbounded components are called the \emph{Margulis cusps}, which can be described more precisely as follows. Denote the fixed point set of $\Gamma$ as 
$$\fix(\Gamma):=\bigcap_{\gamma\in \Gamma} \fix(\gamma).$$
A discrete subgroup $P<\Gamma$ is called a \emph{parabolic subgroup} if $\fix(P)$ consists of a single point $\xi\in \geo X$. Given a constant $0<\epsilon< \epsilon(n, \kappa)$ and a maximal parabolic subgroup $P< \Gamma$, the set $\mathcal{T}_{\epsilon}(P)\subset X$ is precisely invariant under $P$, and we have $\stab_{\Ga}(\mathcal{T}_{\epsilon} (P))=P$, see \cite[Corollary 3.5.6]{bow3}. In this case, $\mathcal{T}_{\epsilon}(P)/ P$ can be regarded as a subset of $M$, called a Margulis cusp. The \emph{cuspidal} part of $M$ is the union of all Margulis cusps, denoted by $\textup{cusp}_{\epsilon}(M)$. Note that $\textup{cusp}_{\epsilon}(M)\subset \textup{thin}_{\epsilon}(M)$. 

In our context, the parabolic subgroups in $\Gamma$ (hence also the cuspidal part of $M$) turn out to be very simple due to the following proposition.
\begin{proposition}
\label{prop:parabolic}
Let $\Gamma<\operatorname{Isom}(X)$ be a torsion-free, discrete isometry group, and $P<\Gamma$ be any parabolic subgroup. Suppose $\delta$ is the critical exponent of $\Gamma$ and $P$ has polynomial growth rate $r$, then we have $r\leq 2\delta$. Thus, 
\begin{enumerate}
\item if $\delta<1$, then all parabolic subgroups (if they exist) are isomorphic to $\mathbb Z$.
\item if $\delta<1/2$, then all non-trivial isometries in $\Gamma$ are loxodromic.
\end{enumerate}

\begin{proof}
Let $\mathcal H$ be a horosphere that $P$ acts on and choose any basepoint $O\in \mathcal H$. Denote $d_{\mathcal H}$ the horospherical distance and $d_P$ the Cayley metric with respect to some fixed finite generating set of $P$. Then there exists a constant $C>0$ such that
\begin{equation}
\label{for1}
d_{\mathcal H}(O,\gamma (O))\leq C\cdot d_P(1,\gamma)
\end{equation}
holds for all $\gamma\in P$. By \cite[Theorem 4.6]{HIH}, there exists a constant $C'>0$ such that for any $p,q\in \mathcal H$ with $d_X(p,q)>C'$, we have
\begin{equation}
\label{for2}
d_X(p,q)\leq 2\ln\left(C'\cdot d_{\mathcal H}(p,q)\right).
\end{equation}
By possibly replacing $C$ and $C'$ by a larger constant, we might assume $C'=C$. Therefore, we obtain from the above the following asymptotic inequalities (for $R$ large)
\begin{align*}
\left|\{\gamma\in P:d_P(1,\gamma)\leq R\}\right|&\leq \left|\{\gamma\in P:d_{\mathcal H}(O,\gamma (O))\leq C\cdot R\}\right|  && \text{by } \eqref{for1} \\
&\lesssim \left|\{\gamma\in P:d_X(O,\gamma (O))\leq 2\ln(C^2\cdot R)\}\right| && \text{by } \eqref{for2} \\
&\simeq e^{2\ln(C^2\cdot R)\delta(P)}  && \text{by } \eqref{for3} \\
&\simeq R^{2\delta(P)},
\end{align*}
where $\delta(P)$ is the critical exponent of $P$. Since $\delta(P)\leq \delta$, it follows that $r\leq 2\delta$.

In particular, if $\delta<1$, then $r<2$ and by the Bass-Guivarc'h formula \cite{Bass, Guiv}, $P$ must be virtually $\mathbb Z$. But since $P$ is torsion-free, it must be $\mathbb Z$ \cite{Stal}. If $\delta<1/2$, then $r<1$ and $P$ can not exist. Thus all non-trivial elements in $\Gamma$ are loxodromic.
\end{proof}

\end{proposition}

\subsection{Geometric finiteness}
\label{sec:geometricfiniteness}

Recall that the \emph{limit set} $\Lambda(\Ga)$ of a discrete subgroup $\Gamma<\Isom(X)$ is defined to be the set of accumulation points of the $\Gamma$-orbit $\Ga(p)$ in $\geo X$, where $p$ is an arbitrary given point in $X$, and the definition is independent of the choice on $p$. If $\Lambda(\Ga)$ is finite, then $\Gamma$ is called \emph{elementary}. Otherwise, it is called \emph{nonelementary}. A point $\xi\in \Lambda(\Gamma)$ is called \emph{a conical limit point} if every geodesic ray $\rho: \mathbb{R}_{+}\rightarrow X$ asymptotic to $\xi$ projects to a non-proper map $\pi\circ\rho: \mathbb{R}_{+}\rightarrow M=X/ \Gamma$. 
We denote by $\Lambda_{c}(\Gamma)$ the set  of all conical limit points.  

We denote $\Hull(\Lambda)\subset X$ the closed convex hull of $\Lambda\subset \geo X$, which is the smallest closed convex subset in $X$ whose accumulation set in $\geo X$ is $\Lambda$, and denote $C(\Ga)=\Hull(\Lambda)/ \Ga$ the \emph{convex core} of $\Ga$. 

A discrete isometry subgroup $\Ga<\Isom X$ is \emph{geometrically finite} if the noncuspidal part of the convex core $C(\Ga)$ in $M=X/ \Ga$ is compact. Otherwise, it is called \emph{geometrically infinite}. Moreover, if $C(\Ga)$ is compact, then the discrete subgroup $\Ga$ is called \emph{convex cocompact}. 

There are various equivalent definitions of geometric finiteness, but for the interest of this paper, we will only mention one of them proved by Kapovich and the first author. For the other equivalent definitions, we refer the readers to \cite{bow3}. The following theorem is a generalization of a previous result of Bonahon \cite{Bon}.

 \begin{theorem}\cite[Theorem 1.5]{KL1}
\label{thm:KL}
A discrete subgroup $\Gamma<\Isom(X)$ is geometrically infinite if and only if there exists a sequence of closed geodesics $\alpha_{i}\subset M=X/ \Gamma$ which escapes every compact subset of $M$. 
\end{theorem}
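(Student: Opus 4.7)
The plan is to prove the two implications separately: the reverse direction by contrapositive, and the forward direction by a closing construction that produces closed geodesics near divergent points of the convex core.

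For the reverse direction, assume $\Gamma$ is geometrically finite, so the noncuspidal part $K=C(\Gamma)\setminus\textup{cusp}_\epsilon(M)$ of the convex core is compact. Any closed geodesic $\alpha\subset M$ lifts to the axis of a loxodromic element of $\Gamma$ with endpoints in $\Lambda(\Gamma)$, hence $\alpha\subset C(\Gamma)$. If $\alpha$ were disjoint from $K$, it would lie in a single Margulis cusp $\mathcal{T}_\epsilon(P)/P$, since these are disjoint components of $\textup{cusp}_\epsilon(M)$. The inclusion $\mathcal{T}_\epsilon(P)/P\hookrightarrow M$ is $\pi_1$-injective with image $P$ by precise invariance, so the free homotopy class of $\alpha$ in $\Gamma$ would lie in the conjugacy class of a parabolic element of $P$, contradicting the fact that a closed geodesic represents a loxodromic. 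Hence every closed geodesic meets $K$, and no sequence $\alpha_i$ can escape a compact set containing $K$.

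For the forward direction, assume $\Gamma$ is geometrically infinite (so in particular nonelementary), and pick points $p_i\in C(\Gamma)\setminus\textup{cusp}_\epsilon(M)$ leaving every compact subset of $M$, with lifts $\tilde p_i\in\Hull(\Lambda(\Gamma))\subset X$. The strategy is to produce closed geodesics $\alpha_i\subset M$ with $d_M(p_i,\alpha_i)$ bounded by a constant independent of $i$, so that the $\alpha_i$ escape together with the $p_i$. Since $\tilde p_i$ lies on some biinfinite geodesic whose endpoints are in $\Lambda(\Gamma)$, and conical limit points (for instance, fixed points of loxodromic elements) are dense in $\Lambda(\Gamma)$, a small perturbation gives a biinfinite geodesic $\eta_i$ passing within $\delta_0$ of $\tilde p_i$ with conical endpoints $\xi_i^\pm\in\Lambda_c(\Gamma)$. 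Conicality supplies sequences of orbit points $\gamma_{i,n}^\pm o\to\xi_i^\pm$ staying inside a uniform tube of $\eta_i$. For $n$ large, a ping-pong/closing argument in the $\delta_0$-hyperbolic space $X$ produces a loxodromic element $g_i\in\Gamma$ whose axis shadows a long segment of $\eta_i$ about $\tilde p_i$. Proposition \ref{prop:piecegeod-close-to-geod} converts the almost-closed piecewise geodesic built from the $\gamma_{i,n}^\pm$ into the actual closed geodesic $\alpha_i=\pi(\textup{axis}(g_i))$, with explicit control on the tracking distance to $\eta_i$ and hence to $p_i$.

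The main obstacle is guaranteeing that the constructed $g_i$ is genuinely loxodromic and that its axis tracks $\eta_i$ near $\tilde p_i$ with a constant independent of $i$. The danger is that the conical approximations of $\xi_i^+$ and $\xi_i^-$ are both absorbed into a common parabolic subgroup, in which case the ping-pong element degenerates into a parabolic or its axis drifts off into a cusp. This is ruled out by the noncuspidal choice of $\tilde p_i$, which keeps $\tilde p_i$ a definite distance outside every Margulis horoball and forces the two conical sequences to approach from transverse asymptotic directions. Once this separation is in force, $\delta_0$-slimness of geodesic triangles in $X$ together with the quantitative closing provided by Proposition \ref{prop:piecegeod-close-to-geod} gives the uniform axis tracking, and the projected closed geodesic $\alpha_i$ passes within a constant distance of $p_i$, completing the construction.
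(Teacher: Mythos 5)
This statement is quoted verbatim from \cite{KL1}; the present paper gives no proof of it, so your attempt can only be assessed on its own merits. Your reverse direction is correct and standard: closed geodesics lie in the convex core, a closed geodesic cannot be contained in a Margulis cusp because the cusp includes $\pi_1$-injectively with parabolic image while a closed geodesic represents a loxodromic class, hence every closed geodesic meets the compact noncuspidal core and no sequence of them can escape it.

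The forward direction has a genuine gap, and it sits exactly at the hard point of the theorem. In the sense used here (see the proof of Theorem \ref{thm:cc-and-inj}, which needs \emph{every} point of $\alpha_i$ to have large injectivity radius for large $i$), a sequence of closed geodesics escapes every compact subset only if for each compact $C$ one eventually has $\alpha_i\cap C=\emptyset$; it is not enough that each $\alpha_i$ contains a point near the escaping points $p_i$. Your stated strategy --- produce $\alpha_i$ with $d_M(p_i,\alpha_i)$ uniformly bounded --- therefore does not yield the conclusion: such $\alpha_i$ could still all pass through a fixed compact set. Worse, the construction you sketch essentially forces this failure: the closing element $g_i$ is assembled from orbit points $\gamma^{\pm}_{i,n}o$ of a \emph{fixed} basepoint $o$, so the associated piecewise geodesic loop in $M$ is based at $\pi(o)$, and the closed geodesic you extract fellow-travels (a power of) that loop; hence every $\alpha_i$ returns to a bounded neighborhood of $\pi(o)$ and the sequence does not escape. (Note also that Proposition \ref{prop:piecegeod-close-to-geod} gives $\alpha\subset N_D(w)$, not $w\subset N_D(\alpha)$, so as invoked it does not even guarantee that $\alpha_i$ passes near $p_i$.) Controlling the \emph{entire} closed geodesic --- e.g., by bounding the translation lengths of the loxodromics produced, or by choosing them so that their full axes remain in the escaping region --- is the substantive content of Bonahon's argument and of its generalization in \cite{KL1}, and your sketch does not engage with it; the ``main obstacle'' you do discuss (parabolic absorption) is a side issue by comparison. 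Two smaller points: a point of $\Hull(\Lambda)$ need not lie \emph{on} a geodesic with both endpoints in $\Lambda$, only within a uniformly bounded distance of one; and the density of loxodromic fixed points in $\Lambda$ should be prefaced by the observation that a geometrically infinite group is necessarily nonelementary.
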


\subsection{Admissible surfaces}\label{sec:admissible} In this section, we give a sketch on the existence of smooth admissible surfaces. This can be treated as a smooth version of \cite[Section 1.1.5]{Cal2}. In our case, we will need a slightly broader category of admissible surfaces than smooth maps, in order to include the gluing of two maps along a smooth boundary. In general the notion of piecewise smooth map is rather technical (using Whitney stratification), but in our context, we only consider  maps from a smooth surface with boundary to a smooth manifold. Thus we simplify the notion to the following:

\begin{definition}
	Given a smooth surface $\Sigma$ (possibly with boundary), and a smooth manifold $M$, we say a map $f:\Sigma\rightarrow M$ is a \emph{piecewise smooth} map if there is a smooth triangulation $\Delta=\{\sigma_1,...,\sigma_m\}$ on $\Sigma$ (i.e. edges are all smooth paths) such that,
	\begin{enumerate}
		\item $f$ is continuous,
		\item $f$ is smooth on the interior of each face $\sigma_i$,
		\item if $e= \sigma_i\cap \sigma_j$ is a common edge, then the restriction $f|_p$ is smooth.
	\end{enumerate}
\end{definition}
Roughly speaking, a piecewise smooth map is just a finite concatenation of smooth maps, possibly pleating along the gluing edges. The singular set forms a piecewise smooth $1$-skeleton on $\Sigma$. Now we return to our context that $M=X/\Ga$ is a complete pinched negatively curved manifold. Suppose $\{\eta_1,...,\eta_k\}$ is a collection of $k$ smooth loops in $M$. If there exists a set of integers $c_1,..., c_k$ such that $\sum_{i=1}^k c_i\cdot [\eta_i]=0$ in $H_1(M,\mathbb Z)$. Then we claim that $\bigcup_i c_i\eta_i$ will bound a piecewise smooth surface in the sense explained below. 

We choose a basepoint $x_0\in M$, and connect $x_0$ to each of the loop $\eta_i$ by a smooth path $p_i$. Then the loop $q_i:=p_i\ast (c_i\eta_i)\ast p_i^{-1}$ is free homotopic to $c_i\eta_i$, which also represents an element $\ga_i\in \Ga\cong \pi_1(M,x_0)$. Since $\sum_{i=1}^k c_i\cdot [\eta_i]=0$ in $H_1(M,\mathbb Z)\cong \Ga/[\Ga,\Ga]$, it follows that the product $\ga=\ga_1\cdots\ga_k$ is an element in the commutator subgroup $[\pi_1(M,x_0),\pi_1(M,x_0)]$. Thus, we can write
$$\gamma=[a_{1}, b_{1}]\cdots [a_{g}, b_{g}],$$
for some $a_i, b_i\in \Ga$. We choose  smooth loops $\alpha_i, \beta_i$ from $x_0$ such that they represent $a_i, b_i$ respectively. Fix a preimage $\widetilde x_0\in X$ of $x_0$ under the projection map $\pi:X\rightarrow M$. The loop $\sigma=\alpha_1\ast\beta_1\ast\alpha_1^{-1}\ast\beta_1^{-1}\ast\cdots\ast\alpha_g\ast\beta_g\ast\alpha_g^{-1}\ast\beta_g^{-1}\ast(q_1\ast\cdots \ast q_k)^{-1}$ is null homotopic, thus lifts to a piecewise smooth loop on $X$. Therefore, it bounds a smooth disk on $X$, that is, there exists a disk $D\subset \mathbb R^2$ and a piecewise smooth map $f:D\rightarrow X$ with $f(\partial D)=\sigma$. Moreover, by identifying $D$ with a $(4g+3k)$-polygon with the label of $\prod_{i=1}^g[\bar a_i, \bar b_i]\cdot\bar p_1\ell_1\bar p_1^{-1}\cdots\bar p_k\ell_k\bar p_k^{-1}$, we can make the map $f$ explicit by sending the edge labels $\bar a_i, \bar b_i, \bar a_i^{-1}, \bar b_i^{-1}, \bar p_i,\ell_i,\bar p_i^{-1}$ to $\alpha_i, \beta_i, \alpha_i^{-1}, \beta_i^{-1}, p_i, c_i\eta_i, p_i^{-1}$ respectively. Therefore, after gluing along the edge labels, $f$ descends to a piecewise smooth map from $\Sigma_{g,k}$ (a genus $g$ surface with $k$ boundary components) to $M$, which sends the boundary components (corresponding to $\ell_i$) to $c_i\eta_i$.

In general, we can make the following definition.

\begin{definition}\label{def:surface}
Denote a compact oriented (not necessarily connected) surface with $k$ boundary components by $\Sigma$. Given a collection of $k$ loops $\{\alpha_1,..., \alpha_k\}$ on $M$, we say a map $f: \Sigma\rightarrow M$ is \emph{admissible} with respect to $\{\alpha_1,..., \alpha_k\}$ if the following diagram commutes:
\[\begin{tikzcd}
\partial \Sigma \arrow{r}{i} \arrow[swap]{d}{\partial f} & \Sigma \arrow{d}{f} \\
\bigcup_{i=1}^k \alpha_i\arrow{r}{i} & M.
\end{tikzcd}
\]
Note that $\alpha_i$ could carry multiplicities, and the orientation of the surface $\Sigma$ induces an orientation on $\partial \Sigma$. In the above commutative diagram, we also require $\partial f$ to preserve the orientations. If there exist such $\Sigma$ and $f$, then we simply say $\bigcup_{i=1}^k \alpha_i$ bounds a surface $f$.
\end{definition}

By the above discussions, we have the following proposition.

\begin{proposition}\label{prop:admissible}
	Suppose $\{\alpha_1,...,\alpha_k\}$ is a collection of $k$ smooth loops in $M$. If there exists a set of integers $c_1,..., c_k$ such that $\sum_{i=1}^k c_i\cdot [\alpha_i]=0$ in $H_1(M,\mathbb Z)$, then there exists a piecewise smooth admissible map with respect to $\{c_1\alpha_1,...,c_k\alpha_k\}$, that is, $\bigcup_{i=1}^k c_i \alpha_i$ bounds a piecewise smooth surface $f:\Sigma\rightarrow M$.
\end{proposition}

%\begin{definition}
%	We say $\bigcup_{i=1}^k c_i \alpha_i$ bounds a (piecewise) smooth surface in $M$ if there exists a surface $\Sigma_k$ with $k$-boundary components and a (piecewise) smooth admissible map $f:\Sigma_k\rightarrow M$ such that $f$ maps each boundary component to the loop $c_i\alpha_i$.
%\end{definition}

%Recall that $H_{1}(Y)=\pi_{1}(Y)/ [\pi_{1}(Y), \pi_{1}(Y)]$. So any set of loops $\{w_i\}$ whose union represents a homologically trivial element in $Y$ bounds at least one admissible surface. When $Y$ is a complete smooth manifold, we can choose the admissible surfaces to be smooth. 

%For the convenience, in the rest of the paper, we identify $\Sigma$ with its image in $Y$ and view the map $f$ as a parameterization, and we simply say the loops $\bigcup_i w_i$ bounds a (smooth) surface $\Sigma$ in $Y$. Later on, when we talk about the area of $\Sigma$ (or a smooth region of $\Sigma$), we mean  the area (counting multiplicity) under the restricted pseudo-metric from $g_Y$. More precisely, we define
%\[A(\Sigma):=\sum_{\alpha}\varphi_\alpha A(U_\alpha)=\sum_{\alpha}\varphi_{\alpha}\int_{U_\alpha}\det\left( g_Y(\frac{\partial f}{\partial x_i^{(\alpha)}},\frac{\partial f}{\partial x_j^{(\alpha)}})\right)^{\frac{1}2}dx_1^{(\alpha)}\wedge dx_2^{(\alpha)},\]
%where $(U_\alpha,x_i^{(\alpha)})$ is any choice of local charts on $\Sigma$, and $\{\varphi_\alpha\}$ is a partition of unity subordinated to $U_\alpha$.

Given two Riemannian manifolds $N,M$, a smooth map $F:N\rightarrow M$ and a positive integer $\mathfrak p\leq \min\{\dim(N),\dim{M}\}$, the $\mathfrak p$-Jacobian of $F$ at a point $x\in N$ is defined to be
\[\operatorname{Jac}_{\mathfrak{p} }(F)(x)=\sup||dF_x(e_1)\wedge dF_x(e_2)\wedge...\wedge dF_x(e_\mathfrak p)||,\]
where the supremum is taken over all orthonormal $\mathfrak p$-frames $\{e_1,...,e_{\mathfrak p}\}$ on $T_xN$, and the norm is induced by the Riemannian inner product at $T_{F(x)}M$. Note that in the case $\mathfrak p=\dim N\leq \dim M$, the $\mathfrak p$-Jacobian of $F$ coincides with $\sqrt{\det_{g_N}F^*g_M}$.
\begin{definition}
	Given a Riemannian manifold $M$, a smooth map $f:\Sigma\rightarrow M$ and a smooth region $U\subset \Sigma$, we define the area of the map on $U$ to be
	\[A(f|_U):=\int_{U}|\operatorname{Jac}_2 f|(x)dV_{\Sigma}.\]
where $dV_{\Sigma}$ is the volume form on $\Sigma$ with respect to some chosen Riemannian metric $g_\Sigma$, and it is clear the definition of area is independent on the choice of $g_\Sigma$. When $U=\Sigma$, we simply denote it by $A(f)$. The definition naturally extends to a piecewise smooth map. Note that, at the region where $df$ is degenerate, $(\operatorname{Jac}_2 f)$ vanishes, so it does not contribute to the area.
\end{definition}

\subsection{Besson-Courtois-Gallot map}\label{sec:BCG}
In this section, we give a brief introduction to the Besson-Courtois-Gallot map and we refer the readers to \cite{BCG} for a more detailed exposition. First we recall that given any discrete subgroup $\Gamma<\operatorname{Isom}(X)$, there exists a family of positive finite Borel measures called the Patterson-Sullivan measures, which satisfy:
\begin{enumerate}
\item $\mu_x$ is $\Gamma$-equivariant, for all $x\in X$,
\item $d\mu_x(\theta)=e^{-\delta B(x,\theta)}d\mu_o(\theta)$, for all $x\in X$, and
$\theta\in \partial_{\infty}X$,
\end{enumerate}
where $\delta$ is the critical exponent of $\Gamma$, $o$ is a basepoint on $X$, and $B(x,\theta)$ is the Busemann function on
$X$ with respect to $o$. Recall that, the Busemann function $B$ is defined by
$$B(x,\theta)=\lim_{t\rightarrow \infty}\left(d(x,\alpha_\theta(t))-t\right)$$
where $\alpha_\theta(t)$ is the unique geodesic ray from $o$ to $\theta$.

We note that the Busemann function $B(x,\theta)$ is convex on $X$. If $\mu$ is any finite Borel measure supported on at least two points on
$\partial_\infty X$, then the following function
\[x\mapsto \mathcal{B}_{\mu}(x):=\int_{\geo X}e^{B(x,\theta)}d\mu(\theta)\]
is strictly convex, and one can check it tends to $+\infty$ as $x\rightarrow \partial_\infty X$.
Hence we can define the barycenter $\operatorname{bar}(\mu)$ of $\mu$ to be the unique point in $X$ where the function attains its minimum.

Now we construct the following map $\tilde F:X\rightarrow X$ that is given by
\[x\mapsto \operatorname{bar}(e^{-B(x,\theta)}\mu_x),\]
and $e^{-B(x,\theta)}\mu_x$ denotes the unique (up to measure zero) Borel measure which is absolutely continuous with respect to $\mu_x$, with the corresponding Radon-Nikodym derivative $e^{-B(x,\theta)}$.

\begin{theorem}[Besson-Courtois-Gallot\cite{BCG}]\label{thm:BCG}
The map $\tilde F: X\rightarrow X$ constructed above satisfies the following conditions:
\begin{enumerate}
\item $\tilde F$ is $\Gamma$-equivariant, and thus descends to a map $F:M\rightarrow M$.
\item $F$ is smooth and homotopic to the identity.
\item $|\operatorname{Jac}_{\mathfrak{p} }(F)(x)|\leq \left(\frac{1+\delta}{\mathfrak p}\right)^{\mathfrak p}$ for any integer $\mathfrak p\in [1,\dim M]$ and any $x\in M$.
\end{enumerate}
\end{theorem}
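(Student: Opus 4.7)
The plan is to dispatch parts (1) and (2) by standard equivariance and implicit-function-theorem arguments, and to focus the main effort on the sharp Jacobian estimate (3), which is the heart of the Besson--Courtois--Gallot technique.

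For (1), I would combine the $\Gamma$-equivariance $\gamma_\ast \mu_x = \mu_{\gamma x}$ of the Patterson--Sullivan family with the isometry-invariance of the Busemann cocycle to show that the weighted measure $e^{-B(x,\theta)}\mu_x$ transforms covariantly under each $\gamma \in \Gamma$; uniqueness of the barycenter then yields $\tilde F(\gamma x) = \gamma \tilde F(x)$, and $\tilde F$ descends to $F : M \to M$. For (2), the point $y = \tilde F(x)$ is characterized by the vanishing of $\nabla_y \mathcal B_{e^{-B(x,\theta)}\mu_x}$ at $y$; the strict convexity of $\mathcal B$ gives a uniformly positive-definite $y$-Hessian, so the implicit function theorem produces a smooth $\tilde F$. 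The homotopy to the identity is the $\Gamma$-equivariant geodesic straight-line interpolation $H_t(x) = \exp_x\bigl(t \exp_x^{-1}\tilde F(x)\bigr)$, which descends to a homotopy on $M$.

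For (3), I would first rewrite the integrand using the Patterson--Sullivan relation as $d\tilde\mu_{x,y}(\theta) = e^{B(y,\theta) - (1+\delta) B(x,\theta)}\, d\mu_o(\theta)$, so the defining equation becomes $\int \nabla_y B(y,\theta)\, d\tilde\mu_{x,y}(\theta) = 0$. Differentiating implicitly in $x$ (using $\nabla_x B(x,\theta) = -e_\theta(x)$, where $e_\theta(x)$ is the unit vector at $x$ pointing to $\theta$) and normalizing $\tilde\mu_{x,y}$ to a probability measure $\nu$, one obtains the operator identity
\[\bigl(H(y) + Q(y)\bigr)\, d\tilde F_x(v) = (1+\delta)\, L(x,y)(v)\]
on $T_y X$, where $H(y) = \int \operatorname{Hess}_y B(y,\theta)\, d\nu$, $Q(y) = \int \nabla_y B \otimes \nabla_y B\, d\nu$, and $L(x,y)(v) = \int \langle e_\theta(x), v\rangle\, \nabla_y B(y,\theta)\, d\nu$. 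Two geometric inputs then come into play: the Hessian comparison $\operatorname{Hess}_y B(y,\theta) \geq I - \nabla_y B \otimes \nabla_y B$, valid since $K \leq -1$, which gives $H(y) + Q(y) \geq I$; and the Cauchy--Schwarz bound $L L^\ast \leq Q$ on $T_y X$, coming from $|e_\theta(x)| = 1$.

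The hard part is the resulting sharp algebraic optimization. Solving for $d\tilde F_x$ and taking the $p$-th exterior power yields
\[|\operatorname{Jac}_p d\tilde F_x|^2 \leq (1+\delta)^{2p}\, \det\nolimits_p\!\bigl((H+Q)^{-1}\, Q\, (H+Q)^{-1}\bigr),\]
and BCG's observation is that the right-hand side, subject to $H + Q \geq I$, $Q \geq 0$, and $\operatorname{tr} Q \leq 1$, is maximized by a rank-$p$ configuration in which $Q$ equals $\tfrac{1}{p}$ times an orthogonal projector onto a $p$-plane; the AM--GM inequality $\prod_{i=1}^p q_i \leq \bigl(\tfrac{1}{p}\sum_{i=1}^p q_i\bigr)^p \leq p^{-p}$ then delivers exactly $\left(\tfrac{1+\delta}{p}\right)^{2p}$. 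The main obstacle I anticipate is carrying out this Lagrange-multiplier optimization rigorously in the presence of the non-commuting symmetric operators $H$ and $Q$, which forces one to first reduce to the worst case (simultaneously diagonalizable $H = I - Q$) via a suitable monotonicity argument; once this reduction is justified, taking square roots gives the claimed bound.
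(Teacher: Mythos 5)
The paper itself does not prove this statement: it is quoted from \cite{BCG}, with only a remark explaining how the $p=1$ case follows from Equation 4.11 there. So the only question is whether your reconstruction of the Besson--Courtois--Gallot argument is sound. Parts (1) and (2) are fine, and your setup for (3) --- the implicit differentiation of the critical-point equation, the resulting identity $(H+Q)\,d\tilde F_x=\pm(1+\delta)L$, the comparison $H+Q\geq I$ coming from $K\leq -1$, and the trace constraint on $Q$ --- is exactly the standard route.

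The gap is in the final optimization. If you only retain $LL^*\leq Q$ and then maximize $\det_p\bigl((H+Q)^{-1}Q(H+Q)^{-1}\bigr)$ subject to $H+Q\geq I$ and $\operatorname{tr}Q\leq 1$, the worst case $H+Q=I$, $Q=\frac1p P_p$ gives $p^{-p}$, so you only obtain $|\operatorname{Jac}_p|^2\leq (1+\delta)^{2p}p^{-p}$, i.e. $|\operatorname{Jac}_p|\leq\bigl((1+\delta)/\sqrt p\,\bigr)^p$, weaker than the claim by a factor of $p^{p/2}$. (Your arithmetic also slips at this point: $(1+\delta)^{2p}p^{-p}$ equals $\bigl((1+\delta)^2/p\bigr)^p$, not $\bigl((1+\delta)/p\bigr)^{2p}$.) The missing idea is that $L=\int e_\theta(x)\otimes\nabla_yB(y,\theta)\,d\nu$ is an integral of rank-one operators built from unit vectors on \emph{both} sides, so one applies Cauchy--Binet and Cauchy--Schwarz directly to $\det[\langle Lu_i,v_j\rangle]$ (with $u_i$ orthonormal in $V$ and $v_j$ orthonormal in $W=d\tilde F_x(V)$) to get the bound $\det(Q_x|_V)^{1/2}\det(Q_y|_W)^{1/2}$, where $Q_x=\int e_\theta(x)\otimes e_\theta(x)\,d\nu$ and $Q_y=Q$ \emph{each} have unit trace; AM--GM then yields $p^{-p/2}$ for each factor, hence $p^{-p}$ in total, and combining with $\det\bigl(P_W(H+Q)|_W\bigr)\geq 1$ gives the stated $\bigl(\frac{1+\delta}{p}\bigr)^p$. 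Replacing $L$ by the one-sided bound $LL^*\leq Q$ throws away the source-side form $Q_x$ and is precisely what loses the factor. This is not cosmetic for the present paper: the $p=2$ constant $\bigl(\frac{1+\delta}{2}\bigr)^2<1$ for all $\delta<1$ is what drives Theorem~\ref{thm:linear iso}, whereas the constant $\frac{(1+\delta)^2}{2}$ your route produces would only be useful for $\delta<\sqrt2-1$.
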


\begin{remark} The case of $\mathfrak p=1$ in $(3)$ is not directly stated in the paper, however it is clear from the 2-form equation \cite[Equation 4.11]{BCG} that $||dF||\leq (1+\delta)$. According to the theorem, if $\delta\leq \mathfrak p-1$, then $|\operatorname{Jac}_{\mathfrak p}(F)|\leq 1$ hence $F$ is a $\mathfrak p$-dimensional volume-decreasing map. However, in order to obtain the linear isoperimetric inequality in Section \ref{sub:linear-isom}, we will need an area-decreasing map, which is assured only in the case $\delta<1$. Thus, we will only apply the theorem to the cases $\mathfrak p=1,2$.
\end{remark}

\subsection*{Notations} In the rest of the paper, $X$ always denotes a negatively pinched Hadamard manifold with sectional curvature $-\kappa^{2}\leq K\leq -1$, and $\Gamma<\Isom(X)$ denotes a torsion-free discrete isometry subgroup. Let $M=X/ \Gamma$ be the quotient manifold, $\pi: X\rightarrow M$ be the quotient map, and $d$ be the distance on $M$. Let $\delta$ denote the critical exponent of $\Gamma$ and $C(\delta)=4/(1-\delta)$. We use $\ell$ and $A$ to denote the length and area function respectively. We let $\textup{inj}(x)$ denote the injectivity radius at a point $x\in M$, and let $\textup{NJ}(S)$ denote the normal injectivity radius of a submanifold $S\subset M$ (see Section \ref{sub:collar}). 
 
\section{Geometry with small critical exponent}
\label{sec:collar}

In this section, we investigate the geometry of the quotient manifold $M$ under the assumption $\delta<1$.

\subsection{Linear isoperimetric type inequality}\label{sub:linear-isom} The study of isoperimetric problem has a great long history. In the classical context, given a region $\Omega\subset \mathbb R^2$, it is natural to ask what is the optimal relation between its area $A(\Omega)$ and the length of its bounding curve $\ell(\partial \Omega)$. It is proved that there is a quadratic relation $A(\Omega)\leq \ell(\partial \Omega)^2/4\pi$, and the equality holds if and only if $\Omega$ has a circular boundary. However, the main interest of this paper has driven us to consider in a slightly different context. Let $M=X/\Gamma$ be a complete quotient manifold and $\mathcal{C}\subset M$ be a union of smooth loops which represents a trivial homology class in $M$. By the discussion in Section \ref{sec:admissible}, $\mathcal{C}$ bounds an admissible surface. Among all admissible surfaces, we find one surface $\Sigma$ such that $A(\Sigma)$ and $\ell(\partial \Sigma)$ satisfy a linear isoperimetric type inequality. 

\begin{definition}
We say a family of loops $\mathcal F=\{\alpha_1,...,\alpha_k\}$ in $M$ is \emph{irreducible} if either
\begin{enumerate}
\item $k=1$ and $\alpha_1$ represents a trivial or torsion homology class, or
\item $\mathcal F$ consists of linearly dependent loops, and any non-trivial subfamily of $\mathcal F$ is linearly independent.
\end{enumerate}
\end{definition}

Suppose $\mathcal F=\{\alpha_1,...,\alpha_k\}$ is an irreducible family of loops. In case $(1)$, $\mathcal F$ consists of one homology class $[\alpha]$, so there is a minimal positive integer $c$ such that $c\cdot[\alpha]=0$. In case $(2)$, there exists a unique (up to a sign) set of integers $c_1,...,c_k$ such that $\gcd(c_1,...,c_k)=1$ and $\sum_{i=1}^k c_i\cdot[\alpha_i]=0$ in $H_1(M)$. Thus, there exist admissible surfaces in $M$ with respect to  $c\cdot[\alpha]$ (or $\bigcup_{i=1}^k c_i \alpha_i$) and by irreducibility they are necessarily connected. Note that $c_i\alpha_i$ denotes the $c_i$ multiple of $\alpha_i$ and when $c_i$ is negative, it means to reverse the orientation of $\alpha_i$. We call the set of integers $c_1,...,c_k$ (or $c$ if in case $(1)$) the \emph{associated integers} of the irreducible family.

\begin{figure}
	\centering
	\includegraphics[width=3.5in]{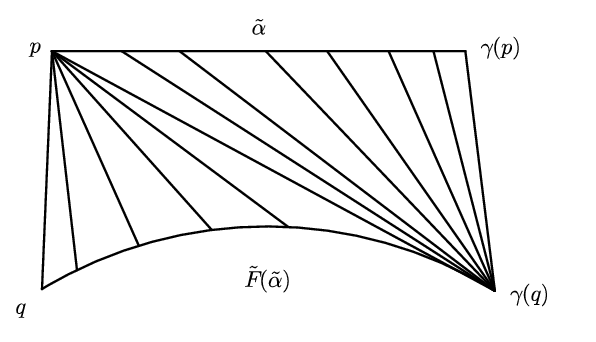}
	\caption{ \label{F2}}
\end{figure}

\begin{theorem}\label{thm:linear iso}
Let $\mathcal F=\{\alpha_1,...,\alpha_k\}$ be any family of smooth loops in $M$ which are linearly dependent in $H_1(M,\mathbb Z)$
such that there are integers $c_1,...,c_k$ satisfying $\sum_{i=1}^k c_i\cdot[\alpha_i]=0$ in $H_1(M)$. Suppose the critical exponent $\delta<1$. Then $\bigcup_{i=1}^k c_i \alpha_i$ bounds a smooth surface $f_0:\Sigma\rightarrow  M$ whose area satisfies
\[A(f_0)\leq \frac{4}{1-\delta}\ell(f_0(\partial \Sigma))= \frac{4}{1-\delta}\left(\sum_{i=1}^k |c_i| \ell(\alpha_i)\right).\]
\end{theorem}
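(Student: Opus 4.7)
My plan is to leverage the area-contracting property of the Besson-Courtois-Gallot (BCG) map $F:M\to M$ in an iterative self-improvement scheme. Under the hypothesis $\delta<1$, Theorem \ref{thm:BCG}(3) with $p=2$ gives $|\operatorname{Jac}_2(F)|\le\bigl(\tfrac{1+\delta}{2}\bigr)^2<1$, so $F$ strictly contracts surface areas; since $F$ is homotopic to the identity, applying $F$ to any admissible surface and then capping off the boundary discrepancy with a homotopy cylinder yields a new admissible surface whose area is smaller, up to an additive cylinder term. Iterating will therefore produce admissible surfaces whose area is controlled by the cylinder area alone, which in turn must be related to $\ell(\mathcal{C})$.

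Concretely, by Section \ref{sec:admissible} the cycle $\mathcal{C}:=\sum_{i=1}^k c_i\alpha_i$ bounds some smooth admissible surface $\Sigma_0\subset M$. I would fix a smooth homotopy $H:M\times[0,1]\to M$ with $H_0=\operatorname{id}$ and $H_1=F$---the geodesic homotopy lifted equivariantly from $X$ being a natural choice---and set $C_0:=H(\mathcal{C}\times[0,1])$, a 2-chain with $\partial C_0=F(\mathcal{C})-\mathcal{C}$. Defining recursively $\Sigma_{n+1}:=F(\Sigma_n)\cup(-C_0)$, each $\Sigma_n$ is admissible with $\partial\Sigma_n=\mathcal{C}$ and satisfies
\[A(\Sigma_{n+1})\le\bigl(\tfrac{1+\delta}{2}\bigr)^{2}A(\Sigma_n)+A(C_0).\]
Summing the geometric series and using $4-(1+\delta)^2=(1-\delta)(3+\delta)$, I obtain
\[A(\Sigma_n)\le\bigl(\tfrac{1+\delta}{2}\bigr)^{2n}A(\Sigma_0)+\frac{4\,A(C_0)}{(1-\delta)(3+\delta)},\]
so letting $n\to\infty$ produces admissible surfaces of area arbitrarily close to $\tfrac{4A(C_0)}{(1-\delta)(3+\delta)}$. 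A near-minimizer or limit argument then extracts a genuine smooth admissible surface attaining this bound.

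The most delicate step, and the main obstacle I anticipate, is establishing a cylinder estimate of the form $A(C_0)\le(3+\delta)\,\ell(\mathcal{C})$, which combined with the iteration yields the claimed $\tfrac{4}{1-\delta}\ell(\mathcal{C})$ bound. To bound $A(C_0)=\int_{\mathcal{C}\times[0,1]}|\partial_s H_t\wedge\partial_t H_t|\,ds\,dt$, the tangential factor $|\partial_s H_t|\le|dH_t|$ is ultimately controlled by $|dF|\le 1+\delta$ (the $p=1$ case of Theorem \ref{thm:BCG}(3)); however, the time factor $|\partial_t H_t|$ is the geodesic displacement $d(x,F(x))$, which admits no a priori bound in terms of local length data. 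I expect to need a careful comparison argument in pinched negative curvature to estimate the area of the ruled surface swept out by the geodesic homotopy, or to replace the naive geodesic homotopy with a smarter one adapted to the geometry of $\mathcal{C}$.
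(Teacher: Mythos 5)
Your framework is essentially the paper's: apply the BCG map to contract area by the factor $\left(\tfrac{1+\delta}{2}\right)^2$, correct the boundary with an annulus between $\mathcal C$ and $F(\mathcal C)$, and solve for the area bound. (The paper phrases this with the infimum $A_0$ over all admissible surfaces and a single application of $F$ to a near-minimizer, rather than your explicit iteration; the two are equivalent, and your geometric-series constant $\tfrac{4}{(1-\delta)(3+\delta)}$ is exactly the coefficient the paper obtains.) However, the step you flag as the ``most delicate'' is not a technicality you can defer --- it is the crux of the proof, and as written your argument has a genuine gap there. The displacement $d(x,F(x))$ along $\mathcal C$ is indeed not controlled by any local data (a priori it can be arbitrarily large on a noncompact $M$), so the straight geodesic homotopy gives no bound on $A(C_0)$ in terms of $\ell(\mathcal C)$, and without such a bound the iteration only proves $A(\Sigma_n)\lesssim A(C_0)$, which is not the statement of the theorem.

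The missing ingredient is the following cone lemma, which is exactly the ``smarter homotopy'' you anticipate. In the universal cover, for \emph{any} point $p\in X$ and any smooth curve $\alpha\subset X$, the geodesic cone $\operatorname{Cone}_p(\alpha)$ satisfies $A\left(\operatorname{Cone}_p(\alpha)\right)\leq \ell(\alpha)$, \emph{independently of} $d(p,\alpha)$. This follows from the Jacobi field comparison in curvature $K\leq -1$: parameterizing the cone by $(s,t)\mapsto \exp_p(t\beta(s))$, the Jacobian equals $\|J_s(t)\|$ for a normal Jacobi field with $J_s(0)=0$, and
\[\|J_s(t)\|\leq \frac{\sinh t}{\sinh D(s)}\,\|J_s(D(s))\|\leq \frac{\sinh t}{\sinh D(s)}\,\|\alpha'(s)\|,\qquad \int_0^{D(s)}\frac{\sinh t}{\sinh D(s)}\,dt\leq 1.\]
One then builds the annulus between $\alpha_i$ and $F(\alpha_i)$ not as $H(\alpha_i\times[0,1])$ but as the projection of a union of two geodesic cones, $\operatorname{Cone}_p(\tilde F(\tilde\alpha_i))$ and $\operatorname{Cone}_{\gamma(q)}(\tilde\alpha_i)$ for suitable cone points, yielding $A(\Sigma_i)\leq \ell(\alpha_i)+\ell(F(\alpha_i))\leq (2+\delta)\ell(\alpha_i)$ via the $p=1$ Jacobian bound $\|dF\|\leq 1+\delta$. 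Plugging $A(C_0)\leq(2+\delta)\ell(\mathcal C)$ into your geometric series gives $\tfrac{4(2+\delta)}{(1-\delta)(3+\delta)}\ell(\mathcal C)<\tfrac{4}{1-\delta}\ell(\mathcal C)$, which is the claimed bound. With this lemma supplied (plus the routine smoothing of the resulting piecewise smooth surface), your argument closes.
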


\begin{proof}
It is sufficient to assume $\mathcal F$ is irreducible. Otherwise, we decompose $\mathcal F$ into irreducible subfamilies and use the additivity of area and length functions on disjoint unions. We consider the set $\mathfrak S$ which consists of all piecewise smooth surfaces bounded by $\bigcup_{i=1}^k c_i \alpha_i$, or more precisely, we set
\[\mathfrak S:=\{f:\Sigma\rightarrow M \mid f \text{ is a piecewise smooth admissible map with respect to } \{c_1\alpha_1,\cdots, c_k\alpha_k\}\}.\]
By Proposition \ref{prop:admissible} it is non-empty. Let $A_0=\inf\{A(f):f\in \mathfrak S\}$. To avoid possible existence and regularity issues (see the following remark) of minimal surfaces in $M$, we can choose a piecewise smooth admissible map $f_\epsilon\in \mathfrak S$ such that $A(f_\epsilon)\leq (1+\epsilon)A_0$ for any $\epsilon>0$. Composing with the Besson-Courtois-Gallot map $F$ as described in Section \ref{sec:BCG}, we obtain a piecewise smooth admissible map $F\circ f_\epsilon$ with respect to $\bigcup_{i=1}^k c_i F(\alpha_i)$. By Theorem \ref{thm:BCG} we have the area estimate
\begin{align*}
\label{for:area}
A\left(F\circ f_\epsilon\right)=\int_{\Sigma}|\operatorname{Jac}_2(F\circ f_\epsilon)|dV_\Sigma&\leq \int_{\Sigma}|\operatorname{Jac}_2 F|\cdot |\operatorname{Jac}_2 f_\epsilon|dV_\Sigma\\
&\leq \left(\frac{1+\delta}{2}\right)^2 A(f_\epsilon)\\
&\leq \left(\frac{1+\delta}{2}\right)^2 (1+\epsilon)A_{0},
\end{align*}
and the length estimate $\ell\left(F(\alpha_i)\right)\leq (1+\delta)\ell(\alpha_i)$. For each $\alpha_i$, since $F(\alpha_i)$ is free homotopic to $\alpha_i$, we can build an (immersed) cylindrical homotopy $\Sigma_i\subset M$ between them by taking the image of union of two geodesic cones $\operatorname{Cone}_p\left(\tilde F(\tilde \alpha)\right)$ and $\operatorname{Cone}_{\gamma(q)}\left(\tilde \alpha\right)$ under the projection $\pi: X\rightarrow M$, see Figure \ref{F2}. Here $\gamma\in\Gamma$ is an element represented by $\alpha$, $\tilde{\alpha}$ is a lift of $\alpha$, and  both $p,q$ and $\gamma(p),\gamma(q)$ are connected by geodesics. To estimate the area of $\Sigma_i$, we will need the following lemma.

\begin{lemma} For any $p\in X$ and any smooth curve $\alpha\subset X$, the geodesic cone $\operatorname{Cone}_p(\alpha)$ has an area bound
\[A\left(\operatorname{Cone}_p(\alpha)\right)\leq \ell(\alpha).\]
\end{lemma}

\begin{proof}
We parameterize the smooth curve by $\alpha:[0,1]\rightarrow X$, and denote $D(s)=d(p,\alpha(s))$. The geodesic cone
$\operatorname{Cone}_p(\alpha)$ can be parameterized by the smooth map 
\[\Phi:[0,1]\times [0,D(s)]\rightarrow X\]
\[(s,t)\mapsto \operatorname{exp}_p\left(t\beta(s)\right)\]
where $\beta(s)$ is the unit vector in the direction  of the preimage of $\alpha$ under the exponential map, i.e. the unique curve in $T_pX$ satisfying $\operatorname{exp}_p\left(D(s)\beta(s)\right)=\alpha(s)$. Since $\alpha(s)=\Phi(s,D(s))$ we have 
\[\alpha'(s)=\left[\frac{\partial\Phi}{\partial s}+\frac{\partial\Phi}{\partial t}\cdot D'(s)\right]\left(s,D(s)\right).\]
Let $\gamma_s(t)=\Phi(s,t)$.  For each $s$, it is a unit speed geodesic connecting $p$ to $\alpha(s)$, so at any point $(s,t)\in [0,1]\times [0,D(s)]$, we have that
\[\frac{\partial\Phi}{\partial t}=\gamma_s'(t),\quad \frac{\partial\Phi}{\partial s}=J_s(t),\]
where $J_s(t)$ is the unique Jacobi field along $\gamma_s$ satisfying $J_s(0)=0$ and $J_s(D(s))=\frac{\partial\Phi}{\partial s}(s,D(s))=\alpha'(s)-\gamma_s'(D(s))\cdot D'(s)$, which is  the projection of $\alpha'(s)$ orthogonal  to $\gamma_s'(D(s))$. This implies that $J_s(t)$ is a normal Jacobi field and that $\frac{\partial\Phi}{\partial t}\perp \frac{\partial\Phi}{\partial s}$. Therefore, we obtain \[|\operatorname{Jac}(\Phi)|=||\frac{\partial\Phi}{\partial s}\wedge\frac{\partial\Phi}{\partial t}||=||\frac{\partial\Phi}{\partial s}||\cdot||\frac{\partial\Phi}{\partial t}||=||J_s(t)||.\]
Using \cite[Proposition 2.3]{HIH} and the curvature assumption $K\leq -1$, we can estimate the norm of the Jacobi fields by
\begin{equation}
\label{for4}
||J_s(t)||\leq \frac{\sinh t}{\sinh (D(s))}\cdot ||J_s(D(s))||\leq \frac{\sinh t}{\sinh (D(s))}\cdot ||\alpha'(s)||.
\end{equation}
Finally we obtain the area estimate of the geodesic cone by
\begin{align}
\begin{aligned}	
A\left(\operatorname{Cone}_p(\alpha)\right)&\leq \int_0^1\int_0^{D(s)}|\operatorname{Jac}(\Phi)|\;dt\;ds\\
&\leq \int_0^1\int_0^{D(s)} \frac{\sinh t}{\sinh (D(s))}\cdot ||\alpha'(s)||\;dt\;ds && \text{by } \eqref{for4} \\
&\leq \int_0^1 ||\alpha'(s)||\;ds \\
&\leq \ell(\alpha).
\end{aligned}
\label{for:area}
\end{align}
\end{proof}

Now we continue with the proof. By the lemma above, we have 
\begin{equation}
\label{for:areacone}
A(\Sigma_i)\leq \ell(\alpha_i)+\ell\left(F(\alpha_i)\right)\leq (2+\delta)\ell(\alpha_i).
\end{equation}
Here $\Sigma_i$ is a piecewise immersed surface in $M$ and we can choose any piecewise smooth parametrization $\sigma_i: S^1\times [0,1]\rightarrow M$ to represent $\Sigma_i$. If we concatenate each $\sigma_i$ with $F\circ f_\epsilon$ (glue $\bigcup_{i=1}^k c_i\Sigma_i$ onto $F\circ f_\epsilon(\Sigma)$ on $M$), we get a new piecewise smooth admissible surface $f_\epsilon'$ with respect to $\bigcup_{i=1}^k c_i \alpha_i$, and by the assumption $A(f_\epsilon')\geq A_0$. On the other hand, combining the above inequalities we have
\begin{align*}
A_{0}&\leq A(f_\epsilon')=A\left(F\circ f_\epsilon\right)+\sum_{i=1}^k |c_i|\cdot A(\Sigma_i)\\
&\leq \left(\frac{1+\delta}{2}\right)^2(1+\epsilon)A_0+(2+\delta)\left(\sum_{i=1}^k |c_i|\ell(\alpha_i)\right)  && \text{by } \eqref{for:area} \text{ and } \eqref{for:areacone}.
\end{align*}
Thus by letting $\epsilon$ tend to zero, we obtain
\[A_0\leq \frac{4(2+\delta)}{(1-\delta)(3+\delta)}\left(\sum_{i=1}^k |c_i| \ell(\alpha_i)\right)<\frac{4}{1-\delta}\left(\sum_{i=1}^k |c_i| \ell(\alpha_i)\right).\]
Therefore, we can always choose a piecewise smooth map in $\mathfrak S$ whose area is arbitrarily close to $A_0$ and finally we can always smoothen it with arbitrarily small increase on the area. In particular, there is a smooth admissible map $f_0$ with area
\[A(f_0)\leq \frac{4}{1-\delta}\left(\sum_{i=1}^k |c_i| \ell(\alpha_i)\right).\]
\end{proof}

\begin{remark} The existence and regularity of minimal surfaces for a general complete manifold relate to the generalized Plateau's problem, which has been studied in \cite{Morrey48}. If there is a uniform lower bound on the injectivity radius on $M$, then the condition of ``homogeneously regular'' in \cite{Morrey48} is satisfied hence the existence and regularity of the area minimizer hold. Although later in Theorem \ref{thm:collar} we  manage to show $M$ has bounded geometry, yet the proof relies on this theorem, hence using this will fall into a circular reasoning.

We do not pursue the optimal bound in the theorem above. Indeed, the linear isoperimetric constant we produce via this method will always tend to infinity as $\delta\rightarrow 1$. This stands as an obstacle in improving our main theorems as $\delta$ approaches $1$.
\end{remark}

\subsection{Asymptotically uniformly bounded tubular neighborhood}
\label{sub:collar}
Let $S$ be a closed submanifold of $M$. Denote $N(S,M)=\{(x,v)\in TM:x\in S\;\textit{and}\;v\perp T_xS\}$ the \emph{normal bundle} of $S$ in $M$, and denote $N_r(S,M)=\{(x,v)\in N(S,M):|v|<r\}$ the \emph{$r$-normal bundle} of $S$ in $M$. The \emph{normal exponential map} $\operatorname{exp}_{S}$ is defined to be the restriction of the exponential map $\operatorname{exp}: TM\rightarrow M$ to the normal bundle $N(S, M)$ of $S$ in $M$. The \emph{normal injectivity radius} $\operatorname{NJ}(S)$ is defined to be the supremum of $r$ such that $\operatorname{exp}_{S}$ is an embedding on $N_{r}(S, M)$.  In the case where $r\leq\operatorname{NJ}(S)$, we say $\operatorname{exp}_{S}(N_{r}(S, M))=\{x\in M\mid d(x, S)< r\}$ is the \emph{$r$-tubular neighborhood} of $S$ in $M$, and we denote it by $T_r(S)$. By convention, if the submanifold has a self-intersection, we declare that it has normal injectivity radius zero.

\begin{lemma}\label{lem:area-lower-bound}
	Let $\alpha$ be a closed geodesic in $M$ with $\operatorname{NJ}(\alpha)=R>0$, and let $T_R(\alpha)$ be its $R$-tubular neighborhood in $M$. If $i:\Sigma\rightarrow M$ is any smooth admissible map with respect to $\{k\alpha, \alpha'\}$ such that either $\alpha'$ is empty, or $\alpha'$ consists of a union of smooth loops outside of $T_R(\alpha)$ (i.e. $d_M(\alpha',\alpha)> R$). Then
	\[A(i|_{i^{-1}(T_R(\alpha))})\geq kR\cdot\ell(\alpha).\]
\end{lemma}

\begin{proof}
	We choose a Riemannian metric $g_0$ on $\Sigma$, and let $\epsilon_1, \epsilon_2$ be two positive real numbers recognized to be small and to be determined later. First, we perturb the pullback metric $i^*g_M$ to be Riemannian on $\Sigma$ by setting $g=i^*g_M+\epsilon_1 g_0$ and use this to estimate the area of $i$. It follows that for any $\epsilon>0$, and any region $U\subset \Sigma$, we have
	\begin{align}
		\begin{aligned}
		|\operatorname{vol}_g(U)-A(i|_U)|&=\left| \int_U 1 dV_{g}-\int_{U} |\operatorname{Jac}_2 i|dV_{g_0}\right|\\
		&=\int_U\left(\sqrt{\operatorname{det}_{g_0}(g)}- \sqrt{\operatorname{det}_{g_0}(i^*{g_M})}\right) dV_{g_0}\\
		&\leq \int_\Sigma\left(\sqrt{\operatorname{det}_{g_0}(g)}- \sqrt{\operatorname{det}_{g_0}(i^*{g_M})}\right) dV_{g_0}\\
		&< \epsilon,
		\end{aligned}\label{ineq:areacomp}
	\end{align}
after choosing $\epsilon_1$ small enough. Note that this follows from the continuity of determinant function, and the estimate is uniform on $U$.

Next, we choose a suitable function on $\Sigma$ and use the coarea formula to estimate $\operatorname{vol}_g(U)$. Denote $\sigma\subset\partial \Sigma$ the boundary component which sends to $k\alpha$ under $i$, and denote $\rho_\alpha:M\rightarrow \mathbb R$ the distance function to $\alpha$ on $M$. Now we construct a function $f:\Sigma\rightarrow \mathbb R$ by setting
\[f=\rho_\alpha\circ i+\epsilon_2 \varphi,\]
where $\varphi$ is a smooth function on $\Sigma$ chosen so that
\begin{enumerate}
	\item $\varphi(x)=0$ on $\sigma$ and $\varphi(x)>0$ on $\Sigma\backslash \sigma$.
	\item there exists a collar neighborhood $V$ of $\sigma$ such that $d\varphi(x)\neq 0$ when $x\in V\backslash \sigma$.
\end{enumerate}
For example, one can choose $\varphi$ to be the distance function to $\sigma$ on its local neighborhood and then extend smoothly to any positive function outside. For this choice, it is clear that $f(x)\geq 0$ and $f^{-1}(0)=\sigma$. Since $M$ is negatively curved, there is no conjugate point for $M$. Thus for any $y\in T_R(\alpha)$, there is a unique geodesic projection onto $\alpha$, so $\rho_\alpha$ is smooth on $T_R(\alpha)\backslash \alpha$. It follows that $f$ is smooth on $i^{-1}(T_R(\alpha))\backslash \sigma\subset \Sigma$. We can estimate the norm of its differential with respect to metric $g$.
\begin{align}
	\begin{aligned}
	||df||&=||d\rho_\alpha\circ di+\epsilon_2 d\varphi||\\
	&\leq ||d\rho_\alpha||\cdot ||di||+\epsilon_2 ||d\varphi||&& \text{note that }i \text{ is $1$-Lipschitz}\\ 
	&< (1+\epsilon),
	\end{aligned}
\label{ineq:df}
\end{align}
after choosing $\epsilon_2$ small enough, and this uses the compactness of $\Sigma$. 

Finally we estimate the area of $i$ on $i^{-1}(T_R(\alpha))$. By the construction of $f$, we have $f^{-1}\left([0,R)\right)\subset i^{-1}(T_R(\alpha))$. Thus if we set $U=f^{-1}\left([0,R)\right)$, then $\operatorname{vol}_g(U)\leq \operatorname{vol}_g(i^{-1}(T_R(\alpha)))$. On the other hand, by the coarea formula \cite[Section 13.4]{BuragoZalgaller}, we obtain from \eqref{ineq:df} that
\begin{align}\label{ineq:coarea}
	\begin{aligned}
		\operatorname{vol}_g(U)&>\frac{1}{1+\epsilon} \int_{U}||df||dV_{g}\\
		&=\frac{1}{1+\epsilon} \int_{0}^R\ell_g(f^{-1}(t))dt && \text{coarea formula}.
	\end{aligned}
\end{align}
Note that in the above formula, $f^{-1}(t)$ might not be a smooth curve if $t$ is a singular value. But by Sard's theorem, almost all values $r\in (0,R)$ are regular, in which case the level sets are unions of smooth circles on $\Sigma$, and $\ell_g$ denotes the total length of the circles. In particular, the above integral makes sense. Other boundary components (if any) of $\Sigma$ do not intersect with $i^{-1}(T_R(\alpha))$ by the assumption, so given any regular value $t\in [0,R)$,  $f^{-1}(t)$ (up to orientation) is homologous to $f^{-1}(0)=\sigma$ on $\Sigma$. Hence taking their images in $M$, we obtain that $i(f^{-1}(t))$ (also a union of smooth loops) is homologous to $k\alpha$ on $M$. Since  they are entirely contained in $T_R(\alpha)$, $i(f^{-1}(t))$ is in fact free homotopic to $k \alpha$. More precisely, for almost all $t\in (0,R)$, if we write $i(f^{-1}(t))$ as a disjoint union of circles $\bigcup_{i=1}^m \alpha_{i}$, then each $\alpha_i$ is a smooth loop free homotopic to $k_i\alpha$ for $k_i\in \mathbb Z$, since the fundamental group of the $R$-neighborhood of $\alpha$ is a cyclic group generated by the loop $\alpha$. (Some $k_{i}$ could be zero in which case $\alpha_{i}$ is homotopically trivial in $M$.) Moreover, we have $\sum_{i=1}^m k_i=k$. Since $\alpha$ is a closed geodesic, we have that $\ell\left(i(f^{-1}(t))\right)=\sum_{i=1}^m \ell(\alpha_i)\geq \sum_{i=1}^m |k_i|\ell(\alpha) \geq k\ell(\alpha)$. Note that $i$ is $1$-Lipschitz, so we have $\ell_g(f^{-1}(t))\geq \ell\left(i(f^{-1}(t))\right)$. Combining the above inequality with \eqref{ineq:areacomp} and \eqref{ineq:coarea}, we have
\[A(i|_{i^{-1}(T_R(\alpha))})>\frac{1}{1+\epsilon}kR\cdot \ell(\alpha)-\epsilon.\]
Since $\epsilon>0$ is arbitrary, the lemma follows.
\end{proof}

\begin{lemma}
\label{lem:lowerbound}
Given $N$ cusps in $M$ and a constant $\epsilon>0$ small such that $\{M_{12\epsilon}^{(i)}:1\leq i\leq N\}$ are disjoint components of the cuspidal part $\textup{cusp}_{12\epsilon}(M)$. Suppose $\iota: \Sigma\rightarrow M$ bounds an irreducible collection of smooth loops $\bigcup_{i=1}^N c_i\alpha_i$, where each $\alpha_i$ is contained in the $2\epsilon$-thinner part $M^{(i)}_{2\epsilon}\subset M_{12\epsilon}^{(i)}$ in each cusp component and is homologically non-trivial. Then 
$$A(\iota)\geq 4\epsilon^{2}.$$

\end{lemma}

\begin{proof} Since the collection is irreducible and $\alpha_1$ is homologically non-trivial in its cusp component (which might be homologically trivial in $M$), $\iota(\Sigma)$ has to leave $M_{12\epsilon}^{(1)}$. We will only focus on the region $U_0:=\iota^{-1}(M^{(1)}_{12\epsilon})$ as shown in Figure \ref{F7}. If we denote $M_{4\epsilon}^{(1)}\subset M_{12\epsilon}^{(1)}$ the $4\epsilon$-thinner part and $T_1=M_{12\epsilon}^{(1)}\backslash M_{4\epsilon}^{(1)}$, then certainly we have
	\[A(\iota)\geq A(\iota|_{i^{-1}(T_1)}).\]
So it suffices to give a lower bound on the area restricted to $T_1$ region.
	
Similar to the proof of Lemma \ref{lem:area-lower-bound}, we first choose the same perturbed Riemannian metric on $\Sigma$ as $g=\iota^* g_M+\epsilon_1 g_0$, and for any $\epsilon'>0$, the estimate of \eqref{ineq:areacomp} still works after choosing $\epsilon_1$ small enough. Thus, we have for any $U\subset \Sigma$, 
	\begin{equation}\label{ineq:volcomp}
		|\operatorname{vol}_g(U)-A(\iota|_U)|<\epsilon'.
	\end{equation}
	Denote $\sigma\subset \partial \Sigma$ the boundary component which maps to $c_1\alpha_1$ under $\iota$, and let $\varphi$ be as before the smooth function on $\Sigma$ such that
	\begin{enumerate}
		\item $\varphi(x)=0$ on $\sigma$ and $\varphi(x)>0$ on $\Sigma\backslash \sigma$.
		\item there exists a collar neighborhood $V$ of $\sigma$ such that $d\varphi(x)\neq 0$ when $x\in V\backslash \sigma$.
	\end{enumerate}
%ADD REFERENCE		{$C^{\infty }$} approximations of convex, subharmonic, and plurisubharmonic functions
	We choose a smooth approximation (\cite[Proposition 2.1]{GreeneWu}) of the injectivity radius function on a neighborhood of $\iota(\Sigma)$, denoted by $j$, such that 
	\begin{enumerate}
		\item $j>0$ on $\iota(\Sigma)$,
		\item $j$ is $(1+\epsilon')$-Lipschitz, and
		\item $|j(y)-\operatorname{inj}(y)|<\epsilon$ on $\iota(\Sigma)$.
	\end{enumerate}
	Choose a smooth bump function $0\leq \psi\leq 1$ on $\Sigma$ such that $\psi=1$ on $\iota^{-1}(T_1)$ and $\psi=0$ on $\sigma$. Since $\Sigma$ is compact, there exists $\mathcal K>0$ such that $||\varphi||<\mathcal K$ and $||d\varphi||<\mathcal K$. We choose a positive constant  $\epsilon_2<\min\{\epsilon, \epsilon'\} / \mathcal{K}$. Now we define the smooth function $f:\Sigma\rightarrow \mathbb R$ by
	\[f=\epsilon_2\varphi+\psi\cdot (j\circ \iota).\]	
By the construction of $f$, we see that $f(x)\geq 0$ on $U_0$ and $f^{-1}(0)=\sigma$. When restricting to $U_1:=\iota^{-1}(T_1)=\iota^{-1}(M_{12\epsilon}^{(1)}\backslash M_{4\epsilon}^{(1)})$, the norm of its differential under metric $g$ can be estimated by
	\begin{align}\label{ineq:dfU}
		\begin{aligned}
			||df||_{U_{1}}&=||\epsilon_2 d\varphi+dj\circ d\iota||\\
			&\leq \epsilon_2||d\varphi||+||dj||\cdot||d\iota||\\
			&<(1+2\epsilon').
		\end{aligned}
	\end{align}
 The first inequality follows from the fact that $\psi=1$ on $\iota^{-1}(T_1)$, and the last inequality uses 1-Lipschitzianity of $\iota$ and also the choice of $j$ and $\epsilon_2$. 
	 Now we investigate the value of $f$ on $U_0$, and apply the coarea formula to give a lower bound for the area of $\iota|_{f^{-1}([4\epsilon,5\epsilon])\cap U_0}$.
	 
\begin{figure}
\centering
\includegraphics[width=3.0in]{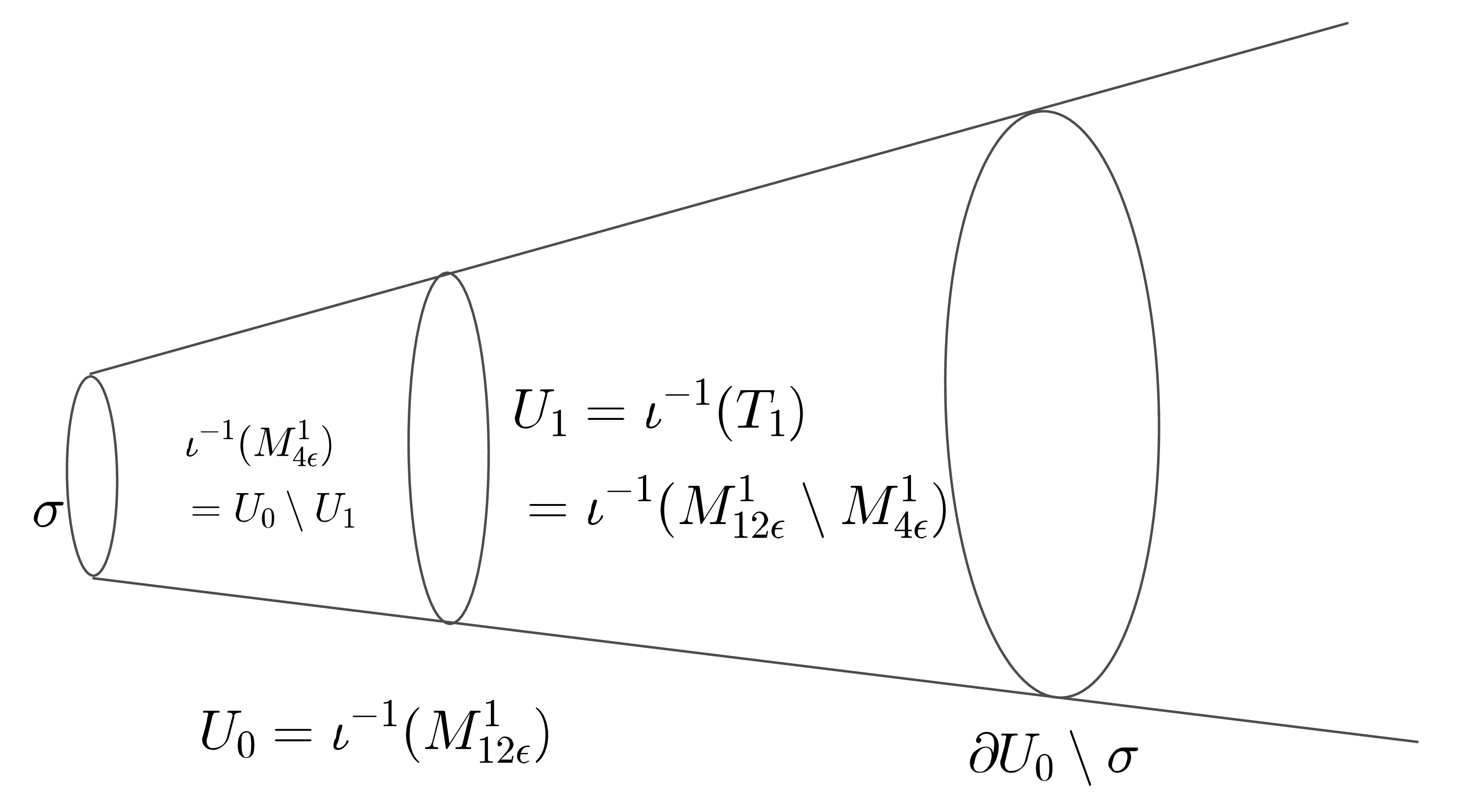}
\caption{ \label{F7}}
\end{figure}

\begin{claim}
The subset $f^{-1}([4\epsilon,5\epsilon])\cap U_0$ is contained in  $U_1$, and $f^{-1}([0,5\epsilon])\cap U_0$ is disjoint from $\partial U_0\backslash \sigma$. 
\end{claim}
\begin{proof}
  For any $x\in U_0\backslash U_1=\iota^{-1}(M^{(1)}_{4\epsilon})$, we have
	\begin{align*}
		f(x) &=\epsilon_2 \varphi(x)+\psi(x)\cdot j(\iota(x))\\
		&<\epsilon+j(\iota(x))\\
		&<\epsilon+\operatorname{inj}(\iota(x))+\epsilon\\
		&<4\epsilon.
	\end{align*}
This implies that $f^{-1}([4\epsilon,5\epsilon])\cap U_0$ is contained in  $U_1$. Second, we notice that $\partial U_0$ consists of $\sigma$ and other boundary components on which $\operatorname{inj}=6\epsilon$. For any $x\in \partial U_0\backslash \sigma$, we have
	\begin{align*}
		f(x)&=\epsilon_2 \varphi(x)+\psi(x)\cdot j(\iota(x))\\
		&>j(\iota(x))\\
		&>\operatorname{inj}(\iota(x))-\epsilon\\
		&>5\epsilon.
	\end{align*}
	This shows that for any $t\in[0,5\epsilon]$, $f^{-1}(t)$ (restricted on $U_0$) does not intersect with $\partial U_0$.
\end{proof}

As a consequence, for any regular values $t\in [0,5\epsilon]$, $f^{-1}(t)$ is a union of smooth loops that cobounds with $f^{-1}(0)=\sigma$, and in particular is homologous to $\sigma$. Under the image of $\iota$, it shows that $\iota(f^{-1}(t)\cap U_0)$ is homologous to $\iota(\sigma)=c_1[\alpha_1]\neq 0$. Moreover, for regular values $t\in (4\epsilon, 5\epsilon)$, and any point $y\in \iota(f^{-1}(t)\cap U_0)$, we let $x\in f^{-1}(t)\cap U_0\subset U_1$ be any preimage of $y$.  Then we have
	\begin{align*}
		\operatorname{inj}(y)&=\operatorname{inj}(\iota(x))\\
		&\geq j(\iota(x))-\epsilon\\
		&=f(x)-\epsilon_2\varphi(x)-\epsilon && \psi(x)=1 \text{ since } x\in U_1\\
		&\geq t-2\epsilon\\
		&> 2\epsilon.
	\end{align*}
	In particular, $\ell(\iota(f^{-1}(t)\cap U_0))\geq 2\operatorname{inj}(y)\geq 4\epsilon$. Since $\iota$ is $1$-Lipschitz, we obtain $\ell_{g}(f^{-1}(t)\cap U_0)\geq 4\epsilon$ for any regular values $t\in (4\epsilon, 5\epsilon)$. Finally, we apply the coarea formula together with \eqref{ineq:volcomp}, \eqref{ineq:dfU} and obtain
	\begin{align*}
		A(\iota)\geq A(\iota|_{f^{-1}([4\epsilon,5\epsilon])\cap U_0})&>\operatorname{vol}_g(f^{-1}([4\epsilon,5\epsilon])\cap U_0)-\epsilon'\\
		&>\frac{1}{1+2\epsilon'}\int_{f^{-1}([4\epsilon,5\epsilon])\cap U_0}||df|| dV_g-\epsilon'\\
		&=\frac{1}{1+2\epsilon'}\int_{4\epsilon}^{5\epsilon}\ell_{g}(f^{-1}(t)\cap U_0)dt-\epsilon'\\
		&\geq \frac{1}{1+2\epsilon'} 4\epsilon^2-\epsilon'.
	\end{align*}
Since $\epsilon'>0$ is arbitrary, the lemma follows.

\end{proof}

Now we are ready to prove $(1)$ and $(2)$ of Theorem \ref{thm:bounded}.

\begin{theorem}\label{thm:collar}
Suppose that $\Gamma<\Isom(X)$ is a finitely generated, torsion-free, discrete isometry subgroup of a negatively pinched (normalized to $K\leq -1$) Hadamard manifold $X$. Let $N(\Ga)$ be the number of cusps in $M$, and $\beta_1(\Gamma)$ be the first Betti number of $M$. Suppose $\delta<1$, then we have the following.
	\begin{enumerate}
		\item $N(\Ga)\leq \beta_1(\Gamma)$.
		\item If an integer $k>\beta_1(\Gamma)-N(\Ga)$, then for any family of closed geodesics $\{\alpha_1,...,\alpha_k\}$ that are mutually $(2C(\delta)+1)$ apart, there exists at least one closed geodesic whose normal injectivity radius is $\leq C(\delta)$, where $C(\delta)=4/(1-\delta)$.
		\item $M$ has bounded geometry.
	\end{enumerate}
\end{theorem}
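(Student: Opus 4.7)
The plan is to unite the three parts via the linear isoperimetric inequality (Theorem~\ref{thm:linear iso}) and the area lower bound of Lemma~\ref{lem:area-lower-bound}, exploiting that $\delta<1$ forces all parabolic subgroups to be cyclic (Proposition~\ref{prop:parabolic}) so that each cusp contributes exactly one homology class.

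For (1), I would fix a horocycle $\beta_i(t)$ at depth $t$ inside the $i$-th cusp, whose length decays exponentially with $t$. If the classes $[\beta_i]\in H_1(M,\mathbb Z)$ were linearly dependent, Theorem~\ref{thm:linear iso} would produce a surface $\Sigma_t$ bounded by $\bigcup_{i=1}^{N} d_i\beta_i(t)$ with $A(\Sigma_t)\le C(\delta)\sum|d_i|\ell(\beta_i(t))\to 0$ as $t\to\infty$. The contradiction will come from a coarea argument on the Busemann function $b_i$ in each cusp: since $|\nabla b_i|=1$ and $\Sigma_t\cap b_i^{-1}(s)$ must have net winding number $d_i$ around the $S^1$ factor of the horosphere quotient $\mathbb R^{n-2}\times S^1$, one gets
\[A(\Sigma_t\cap\textup{cusp}_i)\;\ge\; |d_i|\int_0^t \ell(\beta_i(s))\,ds.\]
Under the pinching $-\kappa^2\le K\le -1$, horocycle lengths satisfy $e^{-\kappa s}\ell(\beta_i(0))\le \ell(\beta_i(s))\le e^{-s}\ell(\beta_i(0))$, so this lower bound approaches a positive constant as $t\to\infty$, while the upper bound on $A(\Sigma_t)$ tends to zero.

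For (2), I would assume for contradiction that $\operatorname{NJ}(\alpha_j)>C(\delta)$ for every $j$. Augmenting the $k$ given geodesics with short horocycles $\beta_i$ placed deep in each of the $N$ cusps gives $k+N>\beta_1(\Gamma)$ loops, hence a non-trivial homological relation $\sum c_j[\alpha_j]+\sum d_i[\beta_i]=0$. Part~(1) makes the $[\beta_i]$ linearly independent, so some $c_j$ must be non-zero. Theorem~\ref{thm:linear iso} then yields an admissible surface $\Sigma$ with
\[A(\Sigma)\;\le\; C(\delta)\Bigl(\sum|c_j|\ell(\alpha_j)+\sum|d_i|\ell(\beta_i)\Bigr).\]
Fix $R$ slightly larger than $C(\delta)$ but smaller than $\min_j\operatorname{NJ}(\alpha_j)$; the hypothesis of mutual distance $>2C(\delta)+1$ keeps the tubes $T_R(\alpha_j)$ pairwise disjoint, and retreating the $\beta_i$ far enough also keeps them disjoint from $\bigcup_j T_R(\alpha_j)$. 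Applying Lemma~\ref{lem:area-lower-bound} to each $\alpha_j$ with $c_j\ne 0$ gives $A(\Sigma)\ge R\sum|c_j|\ell(\alpha_j)$. Letting $\ell(\beta_i)\to 0$ violates the upper bound, since $R>C(\delta)$.

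For (3), I would argue contrapositively: if the injectivity radius were not bounded below on the noncuspidal part, there would be a sequence of closed geodesics $\alpha_m$ with $\ell(\alpha_m)\to 0$, each the core of its own Margulis tube. By the Margulis lemma the tube radii tend to infinity, so eventually the $\alpha_m$ are pairwise farther than $2C(\delta)+1$ apart and all satisfy $\operatorname{NJ}(\alpha_m)>C(\delta)$; selecting any $\beta_1(\Gamma)-N+1$ of them then contradicts~(2). The principal obstacle is the coarea lower bound in (1): one must keep the integrated horocycle length strictly positive even as the boundary retreats to infinite depth, which crucially relies on the pinched curvature to prevent horospherical metrics from contracting too rapidly. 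A minor subtlety in~(2) is coordinating the choice of $R$ with the depth of the $\beta_i$ so that all disjointness hypotheses of Lemma~\ref{lem:area-lower-bound} are satisfied simultaneously.
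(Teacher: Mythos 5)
Your overall strategy is the same as the paper's: play the linear isoperimetric inequality of Theorem~\ref{thm:linear iso} against a coarea-type area lower bound, and your parts (2) and (3) are essentially identical to the paper's argument (the paper likewise picks $R=C(\delta)+\epsilon$ with $\epsilon<1/2$ for tube disjointness, appeals to Lemma~\ref{lem:area-lower-bound} on each tube, and makes the cusp loops short enough that their contribution to the isoperimetric upper bound is negligible; the torsion issues you gloss over are handled there via the notion of an ``irreducible family''). The genuine difference is in part (1). The paper does not go deep into the cusp with a Busemann coarea function; instead it places very short loops $\alpha_i$ deep in each cusp, notes that the bounding surface must cross the fixed annular collar $T_1=M^{(1)}_{\epsilon}\setminus M^{(1)}_{\epsilon/2}$ near the boundary of the first cusp, and runs the coarea formula there with $f=\operatorname{inj}$, using that $\operatorname{inj}\geq \epsilon/4$ on $T_1$ so that every homotopically non-trivial level set has length $\geq\epsilon/4$. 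This yields the fixed area lower bound $\epsilon^2/16$ against an arbitrarily small upper bound.

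Your version of (1) works in outline, but the displayed inequality $A(\Sigma_t\cap\textup{cusp}_i)\geq |d_i|\int_0^t\ell(\beta_i(s))\,ds$ is not justified as written. For a closed geodesic, a loop freely homotopic to $k\alpha$ has length $\geq|k|\ell(\alpha)$ (this is what Lemma~\ref{lem:area-lower-bound} uses), but for a parabolic $\gamma$ the displacement of $\gamma^{k}$ does not grow linearly in $k$, so a level set winding $d_i$ times around the cusp need not have length $\geq|d_i|\ell(\beta_i(s))$; moreover the shortest non-trivial loop at depth $s$ need not be the pushed-in horocycle, and one must argue that the infimum of the displacement over a whole horosphere level is positive in variable curvature. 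None of this is fatal --- you only need a positive lower bound on the level-set length over a \emph{bounded} range of depths, since $\int_0^1$ already beats the vanishing upper bound --- but the cleanest way to secure that positive bound is exactly the paper's device: replace the Busemann function by the ($1$-Lipschitz) injectivity radius function and work in a collar where $\operatorname{inj}$ is bounded below. With that repair your argument is correct. (The pinching estimate $e^{-\kappa s}\ell(\beta_i(0))\leq\ell(\beta_i(s))$ is then not needed at all.)
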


\begin{proof}
		For $(1)$, suppose to the contrary $N(\Ga)>\beta_1(\Gamma)$ ($N(\Ga)$ could be infinite). We choose $\epsilon$ small enough so that the cuspidal part $\textup{cusp}_{12\epsilon}(M)$ consists of $N(\Ga)$ disjoint components $\bigcup_{i=1}^N M_{12\epsilon}^{(i)}$. For each component $M_{12\epsilon}^{(i)}$, the corresponding parabolic subgroup $P_i$ is infinite cyclic by Proposition \ref{prop:parabolic}, so we can choose $\gamma_i\in P_i< \Gamma$ which represents a non-trivial torsion free homology class in $X/ P_{i}$ (not necessarily in $M$). Since $N(\Ga)>\beta_{1}(\Gamma)$, $\{[\gamma_1], \cdots [\gamma_{N(\Ga)}]\}$ is linearly dependent in $H_1(M)$. We can choose an irreducible subfamily containing $[\gamma_1]$ and without loss of generality we assume this to be $\{\gamma_1,...,\gamma_k\}$ where $k\leq \beta_{1}(\Gamma)+1<\infty$. Let $c_1,...,c_k$ be the associated integers such that $\sum_{i=1}^k c_i\cdot[\gamma_i]=0$ (with $c_1\neq 0$).
	On each component $M_{12\epsilon}^{(i)}$, we choose a thinner part $M_{4\epsilon}^{(i)}\subset M_{12\epsilon}^{(i)}$ and let $T_i=M_{12\epsilon}^{(i)}\backslash M_{4\epsilon}^{(i)}$. In particular, $T_i$ are disjoint and for any $x\in T_i$, we have $2\epsilon\leq\operatorname{inj}(x)\leq  6\epsilon$. We choose a loop $\alpha_i\subset M_{2\epsilon}^{(i)}$ representing $[\gamma_i]$ such that $\ell(\alpha_i)$ is so small that $\sum_{i=1}^k |c_i|\ell(\alpha_i)<\epsilon^2/C(\delta)$ \cite[Proposition 1.1.11]{bow3}. By Theorem \ref{thm:linear iso}, $\bigcup_{i=1}^k c_i \alpha_i$ bounds a smooth surface $\iota:\Sigma\rightarrow M$ whose area satisfies 
\begin{equation}
\label{eq:upperbound}
A(\iota)\leq C(\delta)\left(\sum_{i=1}^k |c_i| \ell(\alpha_i)\right)< \epsilon^2.
\end{equation}
However, by Lemma \ref{lem:lowerbound}, $A(\iota)\geq 4\epsilon^{2}$, which contradicts to \eqref{eq:upperbound}. Hence, $N(\Ga)\leq \beta_1(\Ga)$.

		For $(2)$, suppose there are $k=\beta_1(\Gamma)-N(\Ga)+1$ mutually $(2C(\delta)+1)$ apart simple closed geodesics $\alpha_1,...,\alpha_k$ whose normal injectivity radii are $>C(\delta)$. To illustrate the idea, we first assume $M$ has no cusps. Then $[\alpha_1],...,[\alpha_k]$ are linearly dependent on $H_1(M)$. By Theorem \ref{thm:linear iso}, there exist integers $c_1,...,c_k$ such that $\bigcup_{i=1}^k c_i \alpha_i$ bounds a smooth surface $f:\Sigma\rightarrow M$ whose area satisfies 
\begin{equation}
\label{eq:contradiction}	
	A(f)\leq C(\delta)\left(\sum_{i=1}^k |c_i| \ell(\alpha_i)\right).
\end{equation}
	Let $R_i=\operatorname{NJ}(\alpha_i)$ and by the assumption $R_i>C(\delta)$,  we can pick $\epsilon>0$ small enough so that $\epsilon<1/2$ and $C(\delta)+\epsilon<R_i$ for all $i$. Denote  $T_i$ the $(C(\delta)+\epsilon)$-tubular neighborhood of $\alpha_i$, and since $\{\alpha_i\}$ are mutually $(2C(\delta)+1)$ apart, $\{T_i\}$ are disjoint, and so are $\{f^{-1}(T_i)\}$. Therefore by Lemma \ref{lem:area-lower-bound}, we have
\begin{equation}\label{eq:area-lower-bound}
A(f)\geq \sum_{i=1}^k A(f|_{f^{-1}(T_i)})\geq (C(\delta)+\epsilon)\left(\sum_{i=1}^k |c_i|\ell(\alpha_i)\right).
\end{equation}
	This contradicts to \eqref{eq:contradiction}. 
	
For the general case, we pick non-trivial torsion free homology classes $\{[\gamma_1],...,[\gamma_{N(\Ga)}]\}$ on each cusp component   as in $(1)$. This together with $[\alpha_1],...,[\alpha_k]$ form a linearly dependent system on $H_1(M)$. We choose an irreducible system containing $[\alpha_1]$, and without loss of generality we assume it to be $\{[\gamma_1],...,[\gamma_{N(\Ga)}], [\alpha_1],...,[\alpha_k]\}$. Thus there are integers $b_1,...,b_{N(\Ga)}$ and $c_1,...,c_k$ such that $\sum_{i=1}^{N(\Ga)} b_i[\gamma_i]+\sum_{j=1}^k c_j[\alpha_j]=0$. Now we choose a loop $\eta_i$ on each cusp component representing $\gamma_i$ such that $\ell(\eta_i)$ is sufficiently small so that $\sum_{i=1}^{N(\Ga)}|b_i|\ell(\eta_i)<\epsilon\left(\sum_{j=1}^k|c_j| \ell(\alpha_j)\right)/C(\delta)$, where $\epsilon$ is the same constant as above in the non-cusp case. By Theorem \ref{thm:linear iso}, $\left(\bigcup_{i=1}^{N(\Ga)} b_i\eta_i\right)\cup\left(\bigcup_{j=1}^k c_j \alpha_j\right)$ bounds a smooth surface $f:\Sigma\rightarrow M$ whose area satisfies 
	\[A(f)\leq C(\delta)\left(\sum_{i=1}^{N(\Ga)} |b_i| \ell(\eta_i)+\sum_{j=1}^k|c_j| \ell(\alpha_j)\right).\]
Thus we have
\[A(f)< C(\delta)\left(1+\frac{\epsilon}{C(\delta)}\right)\left(\sum_{j=1}^k |c_j| \ell(\alpha_j)\right)=(C(\delta)+\epsilon)\left(\sum_{j=1}^k |c_j|\ell(\alpha_j)\right).
\]
However, the area lower bound estimate in \eqref{eq:area-lower-bound} still holds, and this gives a contradiction.

For $(3)$, suppose $M$ has unbounded geometry, that is, there exists a sequence of closed geodesics $\{\alpha_i\}$ with $\ell(\alpha_i)\rightarrow 0$. When $\ell(\alpha_i)$ is smaller than the Margulis constant, $\alpha_i$ determines a Margulis tube such that the length of every maximal radial arc  tends to $\infty$ as $\ell(\alpha_i)\rightarrow 0$. (See for example \cite[Lemma 2.4]{BCD}.) In particular, the normal injectivity radius $\operatorname{NJ}(\alpha_i)\rightarrow \infty$. By  passing to a subsequence, we can assume that the geodesics  $\alpha_i$ are arbitrarily far apart and their normal injectivity radii are all $>C(\delta)$, which  contradicts  to $(2)$.
\end{proof}

\begin{remark} The assumption $\delta<1$ is crucial in Theorem \ref{thm:collar} (which also traces back to Theorem \ref{thm:linear iso}). Indeed, the main strategy of the proof is to apply an area-decreasing map on the (approximated) area-minimizing surfaces which are bounded either by tiny loops in different cusps or by far apart closed geodesics. The existence of such map follows from a construction of Besson-Courtois-Gallot (Theorem \ref{thm:BCG}) where $\delta<1$ has been used to obtain the area decreasing.

In general, there are examples \cite{Kap1} of finitely generated Kleinian groups $\Gamma<\Isom(\mathbb H^4)$ with infinitely many (rank one) cusps, and by construction it is clear that $\delta\in [2,3]$. Thus, for every $n\geq 4$, one can construct, via the totally geodesic embedding $\mathbb H^4\rightarrow \mathbb H^n$, a Kleinian group $\Gamma< \Isom(\mathbb H^n)$ of the same critical exponent which contains infinitely many cusps. In a recent preprint \cite{IMM20}, Italiano-Martelli-Migliorini constructed new examples of finitely generated Kleinian groups $\Gamma\lhd G<\Isom(\mathbb H^n)$ ($5\leq n\leq 8$) with infinitely many cusps, where $G$ is a lattice and $G/\Gamma\cong \mathbb Z$. Hence it follows that $\delta(\Gamma)=\delta(G)=n-1$. We believe that finitely generated Kleinian groups must have finitely many cusps if $\delta<2$.
\end{remark}

We end this section by the following corollary which turns out to be essential to our proofs of the main theorems. It is a direct consequence of $(2)$ of Theorem \ref{thm:collar}. Roughly speaking, if $\delta<1$, then  closed geodesics asymptotically  have uniformly bounded tubular neighborhoods.

\begin{corollary}\label{prop:subseq-of-bdd-NJ}
Suppose $\delta<1$, and  $M$ has a sequence of escaping closed geodesics. Then there exists a subsequence of escaping closed geodesics whose normal injectivity radii are $\leq C(\delta)$.
\end{corollary}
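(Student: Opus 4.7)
The plan is to argue by contradiction. If the conclusion were false, then only finitely many terms of the escaping sequence $\{\alpha_i\}$ could have normal injectivity radius at most $C(\delta)$. After discarding those finitely many terms, I would still have an infinite subsequence of escaping closed geodesics, and I may assume that every $\alpha_i$ in it satisfies $\operatorname{NJ}(\alpha_i)>C(\delta)$.

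The next step is to pass to a further subsequence $\{\alpha_{i_j}\}$ whose elements are pairwise at distance at least $2C(\delta)+1$ in $M$. Since $M$ is complete, Hopf--Rinow guarantees compactness of closed metric balls, and the escaping hypothesis then translates into $d(\alpha_i,x_0)\to\infty$ for any fixed basepoint $x_0\in M$. Starting from $\alpha_{i_1}=\alpha_1$ and given $\alpha_{i_1},\dots,\alpha_{i_m}$, the $(2C(\delta)+1)$-neighborhood of their union is bounded and hence contained in some closed ball $\bar B(x_0,R_m)$; the escaping property then allows me to choose $\alpha_{i_{m+1}}$ avoiding this ball, and in particular $(2C(\delta)+1)$-apart from all previously chosen geodesics.

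Finally, I would invoke the finiteness of the first Betti number $\beta_1(\Gamma)$, which follows from $\Gamma$ being finitely generated together with the fact that $M$ is a $K(\Gamma,1)$ (the abelianization of a finitely generated group is a finitely generated abelian group), and the bound $N\leq \beta_1(\Gamma)$ supplied by part $(1)$ of Theorem \ref{thm:collar}. Picking any $k=\beta_1(\Gamma)-N+1$ elements from the extracted subsequence then produces a family of mutually $(2C(\delta)+1)$-apart closed geodesics, each with normal injectivity radius strictly greater than $C(\delta)$, which directly contradicts part $(2)$ of Theorem \ref{thm:collar}. The only mildly subtle step is the inductive extraction in the previous paragraph; the rest is essentially bookkeeping, and I expect no real obstacle since parts $(1)$ and $(2)$ of Theorem \ref{thm:collar} already do all the geometric work.
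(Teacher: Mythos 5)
Your proof is correct and is exactly the argument the paper intends: the corollary is stated as a direct consequence of part $(2)$ of Theorem \ref{thm:collar}, and your contradiction setup (discard the finitely many geodesics with $\operatorname{NJ}\leq C(\delta)$, extract a mutually $(2C(\delta)+1)$-apart subfamily via the escaping property and Hopf--Rinow, then apply $(2)$ with $k=\beta_1(\Gamma)-N+1$) is the standard way to spell it out. The only detail worth noting is that finiteness of $\beta_1(\Gamma)$, which you correctly justify from finite generation of $\Gamma$, is implicitly assumed throughout that section.
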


\subsection{Decomposing a closed geodesic}\label{sec:cut-closed-geod}
Suppose $\alpha$ is a closed geodesic in $M$ with $\operatorname{NJ}(\alpha)\leq C(\delta)$. By definition, there exists $x_0\in M$ achieving the normal injectivity radius such that it projects to $\alpha$ in two different geodesic minimizing paths. The two geodesic paths have  an angle of $\pi$. Thus we can decompose $\alpha$ into two piecewise geodesic loops $\alpha'$ and $\alpha''$ as shown in Figure \ref{F3}. It is clear that their lengths satisfy $\ell(\alpha')+\ell(\alpha'')\leq \ell(\alpha)+4C(\delta)$.

\begin{figure}[H]
\centering
\includegraphics[width=4.0in]{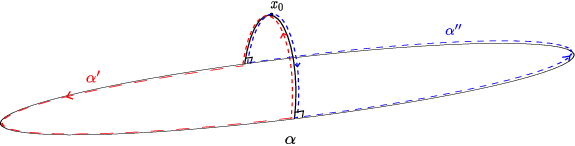}
\caption{ \label{F3}}
\end{figure}

Equivalently, in the universal cover (as shown in Figure \ref{F4}), there exists an isometry $g\in \Gamma$ and  $\tilde{x}_{0}\in X$ such that 
$$d(\tilde{x}_{0},A_{\ga})\leq C(\delta),\quad d(\tilde{x}_{0}, g^{-1}(A_{\ga}))\leq C(\delta),$$
where $A_{\ga}$ is a lift of $\alpha$ in $X$. Let $\tilde x, \tilde y$ be the projection of $\tilde{x}_{0}$ onto $g^{-1}(A_{\ga})$ and $A_{\ga}$ respectively, which will realize the shortest distance between $g^{-1}(A_{\ga})$ and $A_{\ga}$  (so $\ell(\tilde x \tilde y)\leq 2C(\delta)$). Under the projection map $\pi: X\rightarrow M$, the consecutive geodesic segments connecting $g(\tilde{x})$, $\tilde y$, $\tilde x$ maps to $\alpha'$ and the one connecting $\tilde x$, $\tilde y$, $\gamma\cdot g(\tilde x)$ maps to $\alpha''$, where $\gamma$ translates along $A_\gamma$ and corresponds to $\alpha$. From Figure \ref{F3}, we see that  $\alpha'$ represents the isometry $g$ and $\alpha''$ represents the isometry $\ga\cdot g$ which are  nontrivial elements in $\Gamma$. We claim that the group $\langle g, \ga\cdot g\rangle$ is nonelementary. Otherwise, $\langle g, \ga\cdot g\rangle$  is parabolic or loxodromic. If  $\langle g, \ga\cdot g \rangle$ is parabolic, then both $g$ and $\ga\cdot g$ are parabolic and they have the same fixed point,  which implies that $\gamma$ has the same fixed point as the one of the parabolic isometry $g$ and it  contradicts to the assumption that $\Ga$ is discrete by \cite[Lemma 3.1.2]{bow4}. (The proof of Lemma 3.1.2 can be applied to the case of negatively pinched Hadamard manifolds directly). 
 If $\langle g, \ga\cdot g\rangle$ is loxodromic, then $g$ and $\ga\cdot g$ are both loxodromic and they preserve  an axis setwise, which means that $\ga$ will preserve the same axis as $g$. However, note that $\ga$ preserves the axis $A_\ga$ which is not preserved by $g$.

\begin{figure}[H]
\centering
\includegraphics[width=3.0in]{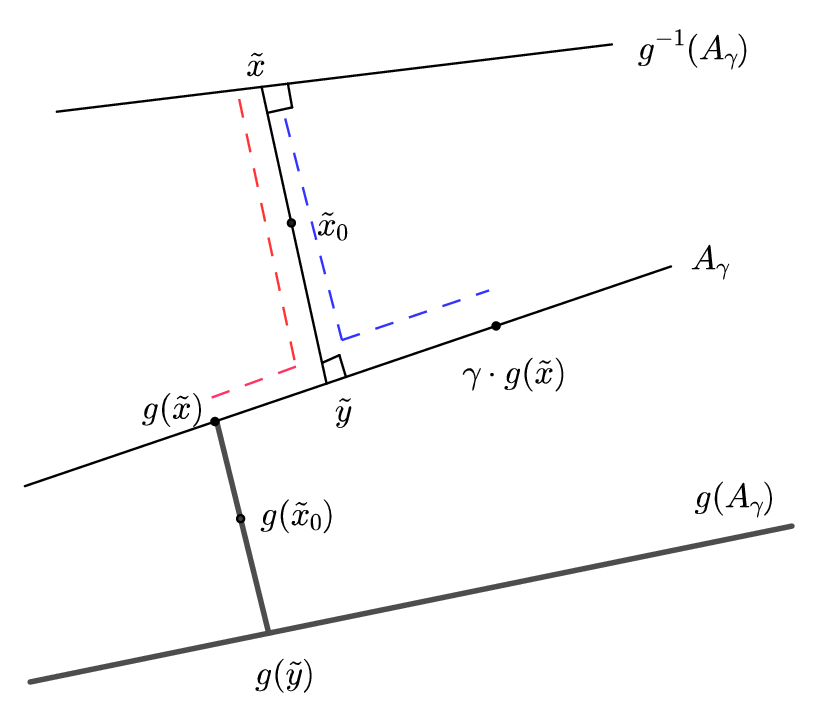}
\caption{ \label{F4}}
\end{figure}

It is possible that $x_0$ projects to the same point on $\alpha$, in which case $\alpha'$ is the entire transverse geodesic loop, and $\alpha''$ is the concatenation $\alpha'^{-1}\ast \alpha$. It is also possible that $\alpha$ may have a transverse self-intersection, in which case the above decomposition coincides with the obvious separation at the self-intersection. Note that non-transverse self-intersection of a closed geodesic $\alpha$ can only occur when $\alpha$ is a multiple of some primitive closed geodesic $\bar \alpha$, in which case  the above decomposition on $\alpha$ can essentially be treated on $\bar\alpha$. We remark that in all the above mentioned ``exceptional'' cases, the decomposition as described always exists. 

We can extend the above decomposition to a piecewise geodesic loop.

\begin{lemma}\label{lem:split-pw-geod}
Let $u\subset M$ be a piecewise geodesic  loop  consisting of at most two geodesics, and let $\alpha\subset M$ be the closed geodesic free homotopic to $u$ with $\operatorname{NJ}(\alpha)\leq C(\delta)$, and $\ell(\alpha)\geq \epsilon$.  Then there exist points $p, q\in u$ (which could be the same) and a geodesic segment $\omega$ connecting $p, q$ whose length is  bounded above by $C_0=2C(\delta)+2D(\epsilon)$. Here $D(\epsilon)$ is the constant in Proposition \ref{prop:piecegeod-close-to-geod}. Moreover, the two piecewise geodesic loops under the decomposition similar to Figure \ref{F3} are homotopically nontrivial. 
\end{lemma}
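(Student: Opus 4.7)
The strategy is to transfer the decomposition of $\alpha$ described in Section \ref{sec:cut-closed-geod} onto the piecewise geodesic loop $u$, using Proposition \ref{prop:piecegeod-close-to-geod} as the bridge. Since $\operatorname{NJ}(\alpha) \leq C(\delta)$, Section \ref{sec:cut-closed-geod} supplies $g \in \Gamma$ with $g(A_i) \neq A_i$ and points $\tilde x \in A_i$, $\tilde y \in g(A_i)$ realizing $d(\tilde x, \tilde y) \leq 2C(\delta)$, where $A_i$ is the axis of the loxodromic element $\gamma \in \Gamma$ representing both $\alpha$ and $u$.

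Choose a lift $\tilde u$ of $u$ whose $\gamma$-invariant extension $\tilde W := \bigcup_{k \in \Z} \gamma^k \tilde u$ shares its pair of ideal endpoints with $A_i$. Since $u$ consists of at most $r=2$ geodesic segments and $\ell(\alpha) \geq \epsilon$, the lift-level form of Proposition \ref{prop:piecegeod-close-to-geod} gives $A_i \subset N_{D(\epsilon)}(\tilde W)$ in $X$; translating by $g$ yields $g(A_i) \subset N_{D(\epsilon)}(g\tilde W)$. Pick $\tilde p \in \tilde W$ with $d(\tilde p, \tilde x) \leq D(\epsilon)$ and $\tilde q \in g\tilde W$ with $d(\tilde q, \tilde y) \leq D(\epsilon)$. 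After replacing $\tilde u$ by an appropriate $\gamma^k$-translate we may assume $\tilde p \in \tilde u$, and then $\tilde q \in h \tilde u$ for some $h \in g\langle\gamma\rangle$. Let $\omega$ be the geodesic segment in $X$ from $\tilde p$ to $\tilde q$; the triangle inequality gives
\[
\ell(\omega) \leq d(\tilde p, \tilde x) + d(\tilde x, \tilde y) + d(\tilde y, \tilde q) \leq 2D(\epsilon) + 2C(\delta) = C,
\]
and projecting $\omega$ to $M$ produces the desired geodesic segment connecting $p := \pi(\tilde p)$ and $q := \pi(\tilde q)$, both on $u$.

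For the homotopical nontriviality of the two piecewise geodesic loops $L_1, L_2$ obtained by cutting $u$ along $\pi(\omega)$ at $p$ and $q$, a direct bookkeeping of lifts starting at $\tilde p$ (tracking concatenations with $\omega$ or $\omega^{-1}$ and portions of $\tilde u$, with the latter possibly passing through the endpoint $\gamma \tilde p_0$) shows that $L_1$ and $L_2$ represent the deck transformations $h^{-1}$ and $h\gamma$ in $\Gamma$, whose product $\gamma$ correctly recovers the class of $u$. Since $g(A_i) \neq A_i$ and $h \in g\langle\gamma\rangle$, we have $h(A_i) = g(A_i) \neq A_i$, so $h \notin \langle\gamma\rangle$; consequently neither $h^{-1}$ nor $h\gamma$ lies in $\langle\gamma\rangle$, and in particular both are nontrivial. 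The main obstacle is the coordination of the lifts: one must arrange $\tilde p \in \tilde W$ and $\tilde q \in g\tilde W$ rather than in unrelated $\Gamma$-translates of $\tilde u$, so that the transition element $h$ lies in the coset $g\langle\gamma\rangle$; this is precisely where the separating condition $g(A_i) \neq A_i$ inherited from the normal-injectivity-radius bound on $\alpha$ is invoked. The length estimate itself is a straightforward triangle inequality once the correct points are selected.
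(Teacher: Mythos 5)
Your proposal is correct and follows essentially the same route as the paper: both invoke the data from Section \ref{sec:cut-closed-geod} (the deck transformation $g$, resp.\ $\gamma'$, and the point within $C(\delta)$ of two distinct lifts of $\alpha$), use Proposition \ref{prop:piecegeod-close-to-geod} at the lift level to move the two projection points onto translates of the lift of $u$, and obtain the bound $2C(\delta)+2D(\epsilon)$ by the triangle inequality. Your nontriviality argument via $h\in g\langle\gamma\rangle$ and $g(A_i)\neq A_i$ is a slightly more explicit version of the paper's remark that the transition element is neither trivial nor equal to $\gamma$.
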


\begin{figure}[H]
\centering
\includegraphics[width=5.0in]{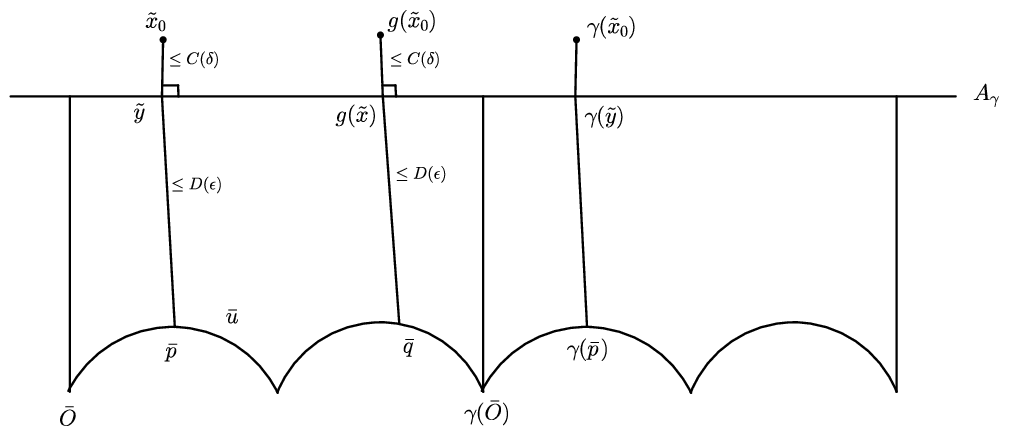}
\caption{ \label{F1}}

\end{figure}

\begin{proof} We write $u$ as the union of two geodesic segments in  $M$ which starts and ends at $O$. Let $\bar{u}$ be a lift of $u$ in $X$ consisting  of two geodesic segments from the lift $\bar{O}$ to $\gamma(\bar{O})$ as in Figure \ref{F1}, where $\gamma\in \Gamma$ is represented by $u$. We denote  the axis of $\gamma$ by $A_\gamma$ which is a lift of $\alpha$. Since $\operatorname{NJ}(\alpha)\leq C(\delta)$, by the discussion above there exists a point $\tilde{x}_{0}\in X$ and a nontrivial element $g\in \Gamma$ with $g\neq \gamma$, such that $\tilde{x}_{0}$ and $g(\tilde{x}_{0})$ project onto $A_\gamma$ at two points $\tilde{y}, g(\tilde{x})$ (which could be the same point) satisfying $d(\tilde{x}_{0}, \tilde{y})\leq C(\delta)$ and $d(g(\tilde{x}_{0}), g(\tilde{x}))\leq C(\delta)$, see Figure \ref{F4}.

By Proposition \ref{prop:piecegeod-close-to-geod}, there exist  $\bar p, \bar q\in \bar u$ such that $d(\tilde{y},\bar p)\leq D(\epsilon)$ and $d(g(\tilde{x}),\bar q)\leq D(\epsilon)$. Thus, the piecewise geodesic consecutively connecting $\bar p, \tilde{y}, \tilde{x}_{0}$ together with the one connecting $g(\tilde{x}_{0}), g(\tilde{x}), \bar q$ projects to   a piecewise geodesic path  connecting $\pi(\bar{p})=p,\pi(\bar{q})=q\in M$ whose total length is $\leq 2C(\delta)+2D(\epsilon)$. Finally, there is a unique geodesic segment $\omega$ connecting $p,q$ which is homotopic to  this piecewise geodesic path  and it is clear that $\ell(\omega)\leq 2C(\delta)+2D(\epsilon)$. 

The geodesic segment $\omega$ divides the piecewise geodesic loop $u$ into two parts $u_1$ and $u_2$. The concatenation of $u_i$ with the geodesic segment $\omega$ gives two piecewise geodesic loops under this decomposition where $i=1, 2$. If the two piecewise geodesic loops are  homotopically trivial, then $\tilde{x}_{0}=g(\tilde{x}_{0})=\ga(\tilde{x}_{0})$. By our construction,  $g\neq \gamma$ and $g\neq 1$. Hence, they are  homotopically nontrivial.

\end{proof}

\subsection{Injectivity radius and convex cocompactness}

In this section, we prove $(3)$ of Theorem \ref{thm:bounded}. We start by introducing the definition of a \emph{bow} which will be used later in the proof.

\begin{definition} Given a closed geodesic $\alpha$, we say $B=\overline{pq}\ast \wideparen{qp}$ is a \emph{bow} on $\alpha$ if
\begin{enumerate}
\item $B$ consists of $2$ edges $\overline{pq}$ and $\wideparen{qp}$ where $p, q$ are $2$ distinct points on $\alpha$,
\item $\overline{pq}$ is a minimizing geodesic connecting $p$ to $q$ on $M$, which might not lie on $\alpha$,
\item $\wideparen{qp}$ is a geodesic segment on $\alpha$ connecting $q$ to $p$, which might not be length minimizing, see Figure \ref{F5}.
\end{enumerate}
We say a bow $B=\overline{pq}\ast \wideparen{qp}$ is \emph{$C$-thin} if $d(p,q)\leq C$, and we say $B$ is \emph{non-trivial} if the loop $\overline{pq}\ast \wideparen{qp}$  of $B$ is homotopically non-trivial in $M$. The \emph{length} of a bow $B=\overline{pq}\ast \wideparen{qp}$ is the length of the loop  $\overline{pq}\ast \wideparen{qp}$.
\end{definition}

\begin{figure}[H]
\centering
\includegraphics[width=4in]{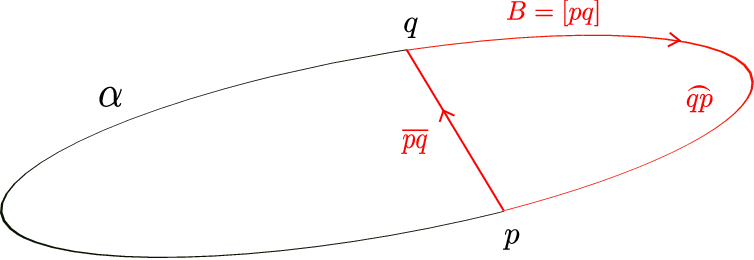}
\caption{ \label{F5}}

\end{figure}

\begin{lemma}
\label{lemma:nogeo}
Suppose that  $\delta<1$ and the injectivity radius on $M$ is bounded by some constant $\epsilon_0/2>0$ from below. Then there does not exist closed geodesic $\alpha$ in $M$ which satisfies the following two conditions:
\begin{enumerate}
\item $\alpha$ has normal injectivity radius at most $C(\delta)$;
\item all points of $\alpha$ have injectivity radii greater than $4C_0+1$, where $C_0$ is the constant in Lemma \ref{lem:split-pw-geod}. 
\end{enumerate}

\end{lemma}

\begin{proof}
Suppose that there exists such a closed geodesic $\alpha$ in $M$. We consider the set $\mathcal{B}=\mathcal B(\alpha, 2C_{0})$ that consists of all non-trivial $2C_{0}$-thin bows on $\alpha$. The set is never empty. Indeed, choose $p,q\in \alpha$ sufficiently close and choose $\wideparen{qp}$ the longer segment on $\alpha$ connecting $q$ to $p$, such that $\ell(\overline{pq})<\ell(\wideparen{qp})$ and $\ell(\overline{pq})\leq 2C_{0}$. This gives a nontrivial $2C_{0}$-thin bow on $\alpha$. 
Let $t=\inf\{\ell(B):B\in \mathcal B\}$. We choose $B=\overline{pq}\ast \wideparen{qp}\in \mathcal{B}$ to be a bow whose length $\leq t+1$. Since $B$ is a $2$-piecewise geodesic path, by Lemma \ref{lem:split-pw-geod}, there exist $r, s\in B$ and a geodesic segment $\omega\subset M$ connecting $r, s$ such that 
\begin{equation}
\label{for6}
\ell(\overline{rs})=\ell(\omega)\leq C_0
\end{equation}
and that $\omega$ splits $B$ non-trivially. Although Lemma \ref{lem:split-pw-geod} by itself does not assure $\omega$ is length minimizing, and $r,s$ might even be the same point, we claim this is not the case. Indeed, since $\ell(\overline{pq})\leq 2C_0$, $r$ must be contained in the $C_0$-neighborhood of $\alpha$. By the assumption on the injectivity radius,   all the  points on $\alpha$ have injectivity radius $>4C_{0}+1$.  Since the injectivity radius function  is $1$-Lipschitz, we have $\operatorname{inj}(r)> 3C_0+1$. This implies that any geodesic segment emanating from $r$ whose length is at most $3C_0+1$ must be uniquely length minimizing. In particular, $\omega$ is uniquely length minimizing and  $r\neq s$.

Based on the positions of $r$ and $s$, we now discuss in the following three cases separately:
\begin{enumerate}
\item $r, s$ are both on $\overline{pq}$,
\item $r, s$ are both on $\wideparen{qp}$,
\item $r\in \overline{pq}$ and $s\in \wideparen{qp}$.
\end{enumerate}

\begin{figure}[H]
\centering
\includegraphics[width=5.0in]{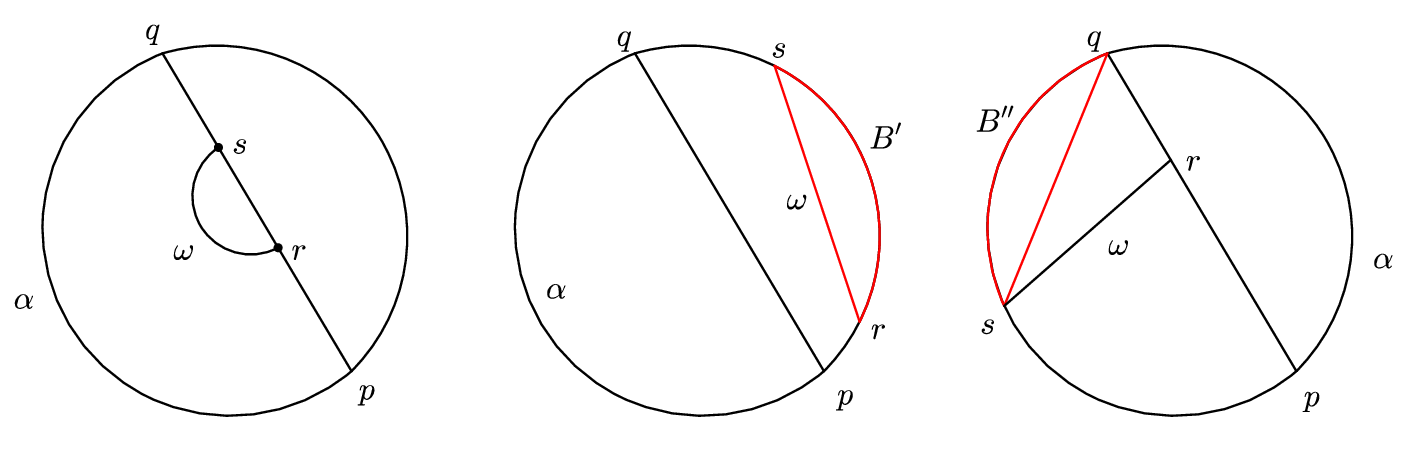}
\caption{ \label{F6}}

\end{figure}

Observe that $(1)$ is impossible since both  $\omega$ and $\overline{pq}$ are uniquely length minimizing, so $\omega$ has to be entirely contained in $\overline{pq}$, which contradicts to the fact that $\omega$ splits $B$ non-trivially. Case $(2)$ is also impossible. To see this, we assume without loss of generality that $q,s,r,p$ are in cyclic order in $\wideparen{qp}$ as in Figure \ref{F6}, and  $r,s$ cuts $\wideparen{qp}$ into three geodesic segments, denoted by $\wideparen{qs},\wideparen{sr},\wideparen{rp}$. By the assumption, the bow $B'=\overline{rs}\ast\wideparen{sr}$ is a non-trivial $C_0$-thin (of course also $2C_0$-thin) bow on $\alpha$. So by the choice of $B$ we have $\ell(B')+1\geq t+1\geq \ell(B)$ hence
\begin{equation}
\label{for5}
 \ell(\overline{rs})+1 \geq \ell(\wideparen{rp})+\ell(\overline{pq})+\ell(\wideparen{qs}).
\end{equation}
  Since $\omega$ splits $B$ non-trivially, we have obtained a homotopically non-trivial piecewise geodesic loop $\eta=\overline{rs}\ast \wideparen{sq}\ast\overline{qp}\ast \wideparen{pr}$ whose total length can be estimated as
\begin{align*}
\ell(\eta)&= \ell(\overline{rs}) +\ell(\wideparen{sq})+\ell(\overline{qp})+\ell(\wideparen{pr})\\
&\leq 2\ell(\overline{rs})+1  && \text{by } \eqref{for5}\\
&\leq 2C_0+1 && \text{by } \eqref{for6}.
\end{align*}
This contradicts to the  assumption on injectivity radius.

For Case $(3)$, note that  $\ell(\overline{pq})\leq 2C_0$, so $r$ is $C_0$ close to either $p$ or $q$, and without loss of generality we assume it is closer to $q$. Therefore by the  triangle inequality, we have $d(q,s)\leq \ell(\overline{rq})+\ell(\omega)\leq 2C_0$. Now we consider the bow $B''=\overline{sq}\ast \wideparen{qs}$ where $\wideparen{qs}$ is the geodesic segment on $\alpha$. The bow is nontrivial. Otherwise, $\overline{sq}$ coincides with $\wideparen{qs}$, which indicates that $\ell(\wideparen{qs})\leq 2C_{0}$. Then we have a piecewise geodesic loop $\overline{sr}\ast \overline{rq}\ast \wideparen{qs}$ with length $\leq 4C_{0}$. By the injectivity radius assumption, it must represent a trivial element, which contradicts to the fact that $\omega$ cuts $B_i$ nontrivially. Hence, $B''\in \mathcal{B}$. By the choice of $B$, we have $\ell(B'')+1\geq t+1\geq \ell(B)$ hence $\ell(\overline{sq})+1\geq \ell(\wideparen{sp})+\ell(\overline{pq})$. So we have obtained a piecewise geodesic loop $\eta'=\overline{qs}\ast \wideparen{sp}\ast\overline{pq}$ whose total length satisfies
\begin{align*}
\ell(\eta')&= \ell(\overline{qs})+\ell( \wideparen{sp})+\ell(\overline{pq})\\
&\leq 2\ell(\overline{qs})+1\\
&\leq 4C_0+1.
\end{align*}
So $\eta'$ must be homotopically trivial according to the injectivity radius assumption. Since $\omega$ splits $B_i$ non-trivially, the piecewise geodesic loop $\overline{rs}\ast \wideparen{sp}\ast \overline{pr}$ is homotopically non-trivial, and therefore differing by an $\eta'$, the geodesic triangle $\eta''=\overline{rs}\ast \overline{sq}\ast \overline{qr}$ is also homotopically non-trivial. On the other hand, we have
\[\ell(\eta'')=\ell(\overline{rs})+\ell( \overline{sq})+\ell(\overline{qr})\leq 4C_0,\]
which contradicts to the injectivity radius assumption.

\end{proof}

The following is a restatement of $(3)$ of Theorem \ref{thm:bounded}, which gives an alternative geometric characterization of convex compactness under the assumption that $\delta<1$.

\begin{theorem}\label{thm:cc-and-inj}
If $\delta<1$, then $\Gamma$ is convex cocompact if and only if the injectivity radius function $\textup{inj}: M\rightarrow \mathbb{R}$ is proper.
\end{theorem}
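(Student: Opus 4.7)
For the forward direction, suppose $\Gamma$ is convex cocompact, so $C(\Gamma)\subset M$ is compact. Since $\Lambda$ contains the fixed-point set in $\geo X$ of every nontrivial element of $\Gamma$ and $\Hull(\Lambda)$ is $\Gamma$-invariant, one can show via convexity of the distance function under pinched negative curvature that $d_X(\tilde x,\gamma\tilde x)\to\infty$ uniformly in $\gamma\in\Gamma\setminus\{1\}$ as $d_X(\tilde x,\Hull(\Lambda))\to\infty$. Consequently $\textup{inj}(x)\to\infty$ as $x\in M$ leaves every bounded neighborhood of $C(\Gamma)$, and $\textup{inj}^{-1}([\epsilon,L])$ is contained in a bounded neighborhood of the compact set $C(\Gamma)$, hence is itself compact.

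For the converse I argue by contradiction: assume $\textup{inj}$ is proper but $\Gamma$ is not convex cocompact. First I would show $M$ has no cusps. Since $\delta<1$, Proposition \ref{prop:parabolic} forces each parabolic subgroup to be isomorphic to $\mathbb Z$, so any Margulis cusp has cross-sections modeled on $\mathbb R^{n-2}\times S^1$ at each horospherical level (I focus on $n\geq 3$; the planar case reduces to classical Fuchsian theory). The translation length of the parabolic generator depends only on the horospherical height, so on each cross-section $\textup{inj}$ is asymptotically a function of the height alone; hence $\textup{inj}^{-1}$ of a suitably small interval contains an unbounded slab $[t_1,t_2]\times \mathbb R^{n-2}\times S^1$, contradicting properness. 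Thus $M$ is cuspless, and Theorem \ref{thm:collar}(2) yields a uniform lower bound $\textup{inj}\geq \epsilon_0>0$ on $M$.

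Since $\Gamma$ is not convex cocompact and $M$ has no cusps, $\Gamma$ is geometrically infinite. Theorem \ref{thm:KL} then supplies an escaping sequence of closed geodesics $\alpha_i\subset M$, and by Corollary \ref{prop:subseq-of-bdd-NJ} I may assume $\textup{NJ}(\alpha_i)\leq C(\delta)$. The plan is to iteratively shorten the $\alpha_i$ via the decomposition of Section \ref{sec:cut-closed-geod}. Each such decomposition replaces a closed geodesic $\alpha$ by two nontrivial piecewise geodesic loops based at a common point of $M$ whose lengths sum to at most $\ell(\alpha)+4C(\delta)$; free-homotoping the shorter of them yields a closed geodesic $\alpha^{(1)}$ of length $\leq \ell(\alpha)/2+2C(\delta)$. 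By Proposition \ref{prop:piecegeod-close-to-geod}, combined with the uniform lower bound $\ell(\alpha^{(1)})\geq 2\epsilon_0$, $\alpha^{(1)}$ stays within a fixed distance of $\alpha$, so the new sequence $\{\alpha_i^{(1)}\}$ continues to escape. Extracting a subsequence with $\textup{NJ}(\alpha_i^{(1)})\leq C(\delta)$ via Corollary \ref{prop:subseq-of-bdd-NJ} and iterating, one obtains after $k$ rounds $\ell(\alpha_i^{(k)})\leq \ell(\alpha_i)/2^k+4C(\delta)$; a diagonal extraction then produces an escaping sequence of closed geodesics of length bounded above by a constant depending only on $\delta$.

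Any point of a closed geodesic of length $\leq L$ has $\textup{inj}\leq L/2$, so the diagonal sequence lies entirely inside $\textup{inj}^{-1}([\epsilon_0,L/2])$, which is compact by properness. This prevents the sequence from escaping, yielding the desired contradiction. The main obstacle in this plan is the bookkeeping of the iteration: successively nesting the subsequence extractions needed to restore the hypothesis $\textup{NJ}\leq C(\delta)$ at every round, and running a careful diagonal argument so that the final sequence simultaneously escapes and has uniformly bounded length.
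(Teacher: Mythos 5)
Your forward direction is fine, though it takes a different (and also standard) route from the paper: you grow the displacement functions away from $\Hull(\Lambda)$, whereas the paper argues via escaping closed geodesics inside the compact convex core. The reduction to the cuspless, bounded-geometry case at the start of your converse is also essentially the paper's.

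The converse, however, has a genuine gap at the iteration/diagonalization step. Theorem \ref{thm:KL} gives an escaping sequence $\{\alpha_i\}$ with \emph{no control on $\ell(\alpha_i)$}, and your scheme needs roughly $k_i\approx\log_2\ell(\alpha_i)$ rounds of the decomposition of Section \ref{sec:cut-closed-geod} to bring the length of $\alpha_i^{(k_i)}$ down to a constant. Each round moves the geodesic by at most a fixed constant $D_2$ (via Proposition \ref{prop:piecegeod-close-to-geod} applied with the uniform lower bound $\epsilon_0$), but after $k_i$ rounds the total drift is $k_iD_2$, and nothing prevents $k_iD_2$ from exceeding $d(x_0,\alpha_i)$ — for instance if $\ell(\alpha_i)$ grows doubly exponentially in the distance of $\alpha_i$ from a basepoint. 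So the diagonal sequence need not escape, and the contradiction with properness evaporates. Note that in Theorem \ref{thm:small-implies-proper} the paper does run an infinite-descent argument of the flavor you propose, but there the starting sequence already has uniformly bounded length (it comes from the \emph{failure} of properness), and the descent is performed on the infimum $t$ of $\liminf$-lengths over all escaping sequences using a \emph{single} decomposition step — exactly to avoid the unbounded iteration you are attempting. In the present theorem, where properness is the hypothesis rather than its negation, the paper proceeds differently: it introduces ``bows'' on each $\alpha_i$, takes a nearly length-minimizing nontrivial $2C_0$-thin bow, and uses Lemma \ref{lem:split-pw-geod} together with a three-case analysis to produce a homotopically nontrivial loop of length at most $4C_0+1$ based near $\alpha_i$. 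This contradicts properness directly (points near an escaping geodesic must have large injectivity radius) without ever needing to bound $\ell(\alpha_i)$. To repair your argument you would need either an a priori bounded-length escaping sequence — which is not available here — or a mechanism replacing the bow construction that produces uniformly short nontrivial loops near the $\alpha_i$ in a bounded number of steps.
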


\begin{proof}
We start with the ``only if'' part which does not need the condition $\delta<1$. Since $\Gamma$ is convex cocompact, it consists of only loxodromic isometries. Note that all the closed geodesics are in the compact convex core since their lifts in $X$ are in $\Hull(\Lambda(\Gamma))$. Therefore, the length of all closed geodesics in $M$ is uniformly bounded from below. Otherwise, there is an escaping sequence of closed geodesics (whose length tends to $0$) inside the convex core, contradicting to the compactness. Suppose the injectivity radius function is not proper, then there exists an escaping sequence of points $x_i\in M$ whose injectivity radii are uniformly bounded by some constant $R$. At each point $x_i$, we choose a geodesic loop $w_i$ whose length satisfies $\ell(w_i)=2\operatorname{inj}(x_i)\leq 2R$. By Proposition \ref{prop:piecegeod-close-to-geod}, the closed geodesic free homotopic to $w_{i}$ is within $D$-neighborhood of $w_{i}$ for some constant $D$, hence we get an escaping sequence of closed geodesics in the convex core of $M$, and this contradicts to the compactness. 

To show the ``if'' part, we first note that properness of the injectivity radius function automatically implies that $M$ has no cusps, and  there is a uniform lower bound $\epsilon_0$ on the length of closed geodesics in $M$. Suppose that $\Gamma$ is not convex cocompact, i.e. geometrically infinite. By Theorem \ref{thm:KL} there is an escaping sequence of closed geodesics $\{\alpha_i\}\subset M$. By Corollary \ref{prop:subseq-of-bdd-NJ}, there is a subsequence of closed geodesics whose normal injectivity radii are all at most $C(\delta)$. For the convenience, we still denote it by $\{\alpha_i\}$. Now we fix a constant $C_0=2C(\delta)+2D(\epsilon_0)$ as in Lemma \ref{lem:split-pw-geod}. Since the injectivity radius function is proper and the sequence $\{\alpha_i\}$ is escaping, all points on $\alpha_i$ have injectivity radii greater than $4C_0+1$ when $i$ is sufficiently large. Hence, there exists a closed geodesic in $M$ whose normal injectivity radius is at most $C(\delta)$, and all points on the geodesic have injectivity radii greater than $4C_{0}+1$, contradicting to Lemma \ref{lemma:nogeo}. Therefore, $\Ga$ is convex cocompact. 

\end{proof}

\section{Proofs of the main  theorems}
\label{sec:proof}

\begin{theorem}\label{thm:small-implies-proper}
For each $n$ and $\kappa$, there exists a positive constant $D(n,\kappa)<1/2$ with the following property that, for any finitely generated, torsion-free discrete isometry subgroup $\Gamma<\Isom X$. If either
\begin{enumerate}
\item $\delta<D(n,\kappa)$, or
\item $\Gamma$ is free and $\delta<1/16$,
\end{enumerate}
then the injectivity radius function on $M$ is proper. 
\end{theorem}

\begin{proof}
Since $D(n, \kappa)<1/2$,  there are no parabolic isometries in $\Ga$ by Proposition \ref{prop:parabolic}. Suppose that the injectivity radius function is not proper. By the same argument as in the first paragraph of the proof of Theorem \ref{thm:cc-and-inj}, there exists an escaping sequence of closed geodesics  $\{\alpha_i\}$ of uniformly bounded length in $M$. 
Let $\mathcal G^{\infty}$ be the set of all escaping sequences of closed geodesics in $M$, and let $t=\inf\{\liminf_{i\rightarrow \infty}{\ell(\alpha_i)}:\{\alpha_i\}\in \mathcal G^{\infty}\}$. From the previous discussion, we see that $t<\infty$. On the other hand, $M$ has bounded geometry according to Theorem \ref{thm:collar}, so $t>0$. 

We claim that $t\leq 4C(\delta)$. Suppose $t>4C(\delta)$. Then  there exists an escaping sequence of closed geodesics $\alpha_i$ with $\liminf_{i\rightarrow \infty} \ell(\alpha_i)=s\in (t,t+\epsilon_0)$, where $\epsilon_{0}$ is a  fixed  positive number  smaller than $(t-4C(\delta))/2$. By Corollary \ref{prop:subseq-of-bdd-NJ} there exists a subsequence, which by abuse of notation we still denote by $\{\alpha_i\}$, such that $\lim_{i\rightarrow \infty}\ell(\alpha_i)=s$ and $\operatorname{NJ}(\alpha_i)\leq C(\delta)$ for all $i$. Without loss of generality, we assume $\ell(\alpha_i)\in (t,t+\epsilon_0)$ for all $i$. By Section \ref{sec:cut-closed-geod}, each $\alpha_i$ can be decomposed into two nontrivial loops $\alpha_i'$ and $\alpha_i''$ such that $\ell(\alpha_i')+\ell(\alpha_i'')\leq \ell(\alpha_i)+4C(\delta)$. So the shorter one, which we assume to be $\alpha_i'$, has length $\leq \frac{1}2 \ell(\alpha_{i})+2C(\delta)$, and it represents a nontrivial isometry in $\Gamma$. There is a  closed geodesic $\nu_i$ free homotopic to $\alpha_i'$ with length $\leq \frac{1}2 \ell(\alpha_{i})+2C(\delta)$. Since $M$ has bounded geometry, $\nu_{i}$ is inside a uniformly bounded neighborhood of $\alpha_i'$ by Proposition \ref{prop:piecegeod-close-to-geod}. Thus, we have found another escaping sequence of closed geodesics $\nu_i$, which satisfies
\begin{align*}
\ell(\nu_i)\leq \ell(\alpha'_i)&\leq \frac{1}{2}\ell(\alpha_i)+2C(\delta)\\
&\leq \frac{1}{2}(t+\epsilon_0)+2C(\delta)\\
&<\frac{1}{2}\left(t+\frac{1}{2}\left(t-4C(\delta)\right)\right)+2C(\delta)\\
&=\frac{3}{4}t+C(\delta).
\end{align*}
The last two inequalities follow from the choices of $\{\alpha_i\}$ and $\epsilon_{0}$.
Hence $\liminf_{i\rightarrow \infty}\ell(\nu_i)\leq \frac{3}{4}t+C(\delta)<t$.
This contradicts to the choice of $t$. Therefore, we have $t\leq 4C(\delta)$.

It means that for any $\epsilon>0$, there exists a primitive closed geodesic, denoted by $\alpha_0$, such that $\ell(\alpha_0)\leq t+\epsilon\leq 4C(\delta)+\epsilon$, and that $\operatorname{NJ}(\alpha_0)\leq C(\delta)$. By Section \ref{sec:cut-closed-geod}, $\alpha_0$ can be decomposed to two nontrivial loops $\alpha_0'$ and $\alpha_0''$, and again we assume $\alpha_0'$ is the shorter one. So $\ell(\alpha_0')<4C(\delta)+\epsilon$. Let $x_0$ be a common point of $\alpha_0$ and $\alpha_0'$.  Note that $\alpha_0$ and $\alpha_0'$ represent two loxodromic elements $\gamma_0,\gamma_0'\in \pi_1(M,x_0)\cong \Gamma$, which generate a non-elementary subgroup $\langle \gamma_{0}, \gamma'_{0}\rangle =\Gamma_0<\Gamma$.

Recall that for any group $G$ with finite generating set $S$, its entropy is defined as:
\[h(G,S)=\lim_{N\rightarrow \infty}\frac{\ln|\{g\in G:d_{S}(1,g)\leq N\}|}{N},\]
where $d_S$ is the Cayley  graph metric determined by $S$. 

If we are in case $(2)$ that $\Gamma$ is free, then $\Gamma_0$ must be a free subgroup isomorphic to $F_2$. So $h(\Gamma_0,S)=\ln 3$ for $S=\{\gamma_0,\gamma_0'\}$. Note that  the lengths of  geodesic loops from $x_0$ representing $\gamma_0$ and $\gamma_0'$ are both bounded  by $4C(\delta)+\epsilon$. We conclude that the orbit map $\gamma\mapsto \gamma\cdot x_0$ gives a $(4C(\delta)+\epsilon)$-Lipschitz injection from $(\Gamma_0,d_S)$ to $(X,d)$. This implies
\[\delta=\delta(\Gamma)\geq \delta(\Gamma_0)\geq \frac{1}{4C(\delta)+\epsilon} h(\Gamma_0,S)=\frac{\ln 3}{4C(\delta)+\epsilon}.\]
The last inequality follows from \eqref{for3}.
By choosing $\epsilon$ small enough and assuming $\delta<1/16$, one can check that the above inequality cannot hold. The contradiction implies that the injectivity radius is proper.

If we are in case $(1)$, then according to \cite[Theorem 1.1]{Dey-Kapovich-Liu}, there is a free subgroup $\Gamma_0'<\Gamma_0$ generated by two elements $g_0,g_0'$, whose word lengths measured in $(\Gamma_0,S)$ are bounded above by some universal constant $C(n,\kappa)$ depending only on the dimension and lower sectional curvature of $X$. Denote $S_0=\{g_0,g_0'\}$. Therefore, the orbit map $(\Gamma_0',d_{S_0})\rightarrow (X,d)$ through the inclusion $\Gamma_0'\rightarrow \Gamma_0$ is a $(4C(\delta)+\epsilon) C(n,\kappa)$-Lipschitz injection. This implies
\[\delta\geq \delta(\Gamma_0)\geq \frac{1}{(4C(\delta)+\epsilon)C(n,\kappa)} h(\Gamma_0',S_0)=\frac{\ln 3}{(4C(\delta)+\epsilon)C(n,\kappa)}.\]
Thus, there exists a constant $D(n,\kappa)$ which is smaller than $1/2$ such that by choosing $\epsilon$ small enough and assuming $\delta<D(n,\kappa)$, the above inequality fails. The contradiction again implies that the injectivity radius is proper.

\end{proof}

\begin{remark} For case (1), instead of passing to a rank 2 free subgroup, one can also apply the result of \cite{BCG2} to give a uniform lower bound on the entropy of $\Gamma_{0}$. 

\end{remark}

Now we can finish the proofs of our main results in the introduction.

\medskip
\noindent
{\bf Proofs of Theorem \ref{thm:main} and Theorem \ref{thm:mainfree}: }Theorem \ref{thm:main} follows from Theorem \ref{thm:cc-and-inj} and Theorem \ref{thm:small-implies-proper}. For the proof of Theorem \ref{thm:mainfree}, there exists a finite index free subgroup $\Ga'<\Ga$ such that $\delta(\Ga')= \delta(\Ga)<1/16$. Then $\Ga'$ is convex cocompact by Theorem \ref{thm:cc-and-inj} and Theorem \ref{thm:small-implies-proper}, which implies that $\Ga$ is also convex co-compact.  \qed

\medskip
\noindent
{\bf Proof of Corollary \ref{coro:virtuallyfree}: } Let $D(n)$ be the constant $D(n, \kappa)$ in Theorem \ref{thm:main} with $\kappa=1$. Suppose that $\Ga<\Isom(\H^{n})$ is a finitely generated discrete isometry subgroup with $\delta(\Ga)<D(n)<1/2$. By Selberg lemma, there exists a finite index torsion-free subgroup $\Ga'<\Ga$ with $\delta(\Ga')=\delta(\Ga)<D(n)<1/2$. By Theorem \ref{thm:main}, $\Ga'$ is convex cocompact. Hence, the Hausdorff dimension of the limit set equals $\delta(\Ga')$ \cite{BJ} which is smaller than 1. Note that since the limit set is a second countable, compact metric space (hence also locally compact and Hausdorff), its topological dimension equals the small inductive dimension, which is bounded above by its Hausdorff dimension, hence must be zero. This implies that the limit set is totally disconnected (in fact a Cantor set). Then we apply a result of Kulkarni \cite[Theorem 6.11]{Kulkarni78}, which states that if the limit set of a finitely generated Kleinian group is totally disconnected, then the group splits as a free amalgamation of a free group with virtually abelian groups corresponding to the parabolic subgroups. Since the condition $\delta(\Gamma')<1$ excludes all free abelian factors of higher rank, we conclude $\Ga'$ must be free. Therefore, $\Ga$ is virtually free. \qed

%\bibliographystyle{alpha}
%\bibliography{biblio}

\end{document}